\definecolor{darkgreen}{rgb}{0,0.45,0}
\definecolor{darkred}{rgb}{0.6,0,0}
\definecolor{darkblue}{rgb}{0,0,0.7}
\theoremstyle{plain}
\newtheorem{thm}{Theorem}
\newtheorem*{thm*}{Theorem}
\newtheorem{lem}[thm]{Lemma}
\newtheorem*{prop*}{Proposition}
\newtheorem{conj}[thm]{Conjecture}
\newtheorem*{theorem1}{Theorem 1}
\theoremstyle{definition}
\newtheorem{defn}[thm]{Definition}
\newtheorem{ex}[thm]{Example}
\theoremstyle{remark}
\newtheoremstyle{named}{}{}{\itshape}{}{\bfseries}{.}{.5em}{\thmnote{#3's }#1}
\theoremstyle{named}
\newcommand{\define}[1]{{\bf \boldmath{#1}}\index{#1}}
\newcommand{\john}[1]{\textcolor{purple}{#1}}
\newcommand{\todd}[1]{\textcolor{darkgreen}{#1}}
\DeclareFontFamily{U}{min}{}
\DeclareFontShape{U}{min}{m}{n}{<-> udmj30}{}
\newcommand{\maps}{\colon}
\newcommand{\To}{\Rightarrow}
\newcommand{\GL}{\mathrm{GL}}
\newcommand{\SL}{\mathrm{SL}}
\newcommand{\Sp}{\mathrm{Sp}}
\newcommand{\SO}{\mathrm{SO}}
\newcommand{\OO}{\mathrm{O}}
\newcommand{\M}{\mathrm{M}}
\renewcommand{\O}{\mathcal{O}}
\newcommand{\End}{\mathrm{End}}
\newcommand{\namedcat}[1]{\mathsf{#1}}
\mathchardef\mhyphen="2D
\newcommand{\Alg}{\namedcat{Alg}}\newcommand{\Aff}{\namedcat{AffSch}}
\newcommand{\Bialg}{\namedcat{Bialg}}
\newcommand{\Comm}{\namedcat{Comm}}
\newcommand{\Fin}{\namedcat{Fin}}
\newcommand{\LinCat}{\namedcat{LinCat}}
\newcommand{\Coalg}{\namedcat{Coalg}}
\newcommand{\Cocomm}{\namedcat{Cocomm}}
\newcommand{\ksbar}{\overline{k\S}}
\newcommand{\Rep}{\namedcat{Rep}}
\newcommand{\tRig}{2\mhyphen\namedcat{Rig}}
\newcommand{\Vect}{\namedcat{Vect}}
\newcommand{\C}{\namedcat{C}}
\newcommand{\D}{\namedcat{D}}
\newcommand{\I}{\namedcat{I}}
\renewcommand{\P}{\namedcat{P}}
\newcommand{\R}{\namedcat{R}}
\renewcommand{\S}{\namedcat{S}}
\newcommand{\AAff}{\namedbicat{AffSch}}
\newcommand{\TRig}{2\mhyphen\namedbicat{Rig}}
\newcommand{\im}{\mathrm{im}}
\newcommand{\Comod}{\mathsf{Comod}}
\newcommand{\op}{^\mathrm{op}}
\renewcommand{\hom}{\mathrm{hom}}
\newcommand{\Sym}{\mathrm{Sym}}
\newcommand{\namedbicat}[1]{\mathbf{#1}}
\newcommand{\FFin}{\namedbicat{Fin}}
\newcommand{\Cat}{\namedbicat{Cat}}
\newcommand{\Lin}{\namedbicat{Lin}}
\begin{document}

\title[Tannaka Reconstruction and the Monoid of Matrices]{Tannaka Reconstruction \\ and the Monoid of Matrices}

\author[Baez]{John C.\ Baez$^{1}$} 
\author[Trimble]{\\Todd Trimble$^2$}

\address{$^1$Department of Mathematics, University of California, Riverside, CA 92521, USA}

\address{$^2$Department of Mathematics, Western Connecticut State University, Danbury, CT 06810, USA}

\email{baez@math.ucr.edu, trimblet@wcsu.edu}

\begin{abstract}
Settling a conjecture from an earlier paper, we prove that the monoid $\M(n,k)$ of $n \times n$ matrices in a field $k$ of characteristic zero is the `walking monoid with an $n$-dimensional representation'.   More precisely, if we treat $\M(n,k)$ as a monoid in affine schemes, the 2-rig $\Rep(\M(n,k))$ of algebraic representations of $\M(n,k)$ is the free 2-rig on an object $x$ with $\Lambda^{n+1}(x) \cong 0$.  Here a `2-rig' is a symmetric monoidal $k$-linear category that is Cauchy complete.  Our proof uses Tannaka reconstruction and a general theory of quotient 2-rigs and 2-ideals.  We conclude with a series of conjectures about the universal properties of representation 2-rigs of classical groups.
\end{abstract}

\maketitle
\setcounter{tocdepth}{1} 
\tableofcontents

\section{Introduction}
\label{sec:intro} 

The monoid $\M(n,k)$ of $n \times n$ matrices with entries in a field $k$, with matrix multiplication as its monoid structure, plays a distinguished role in linear algebra and representation theory.  It is the `walking monoid with an $n$-dimensional representation'.  At a superficial level, this says that any $n$-dimensional representation of any monoid factors through the tautologous representation of $\M(n,k)$ on $k^n$.  But there is another deeper sense in which this is true.

The Tannakian philosophy directs us to study an algebraic structure through its category of representations.  Here we show, roughly speaking, that the category of algebraic representations of $\M(n,k)$ is the free 2-rig on an object of dimension $n$.  But this statement needs some clarification.

A 2-rig is a categorified form of a rig, or `ring without negatives'.  In a 2-rig of representations, addition is the direct sum of representations while multiplication is the tensor product.  There are various ways to make this idea precise, but for our present purposes we define a 2-rig over a field $k$ to be a symmetric monoidal $k$-linear category that is Cauchy complete.  Examples include categories of vector bundles, group representations, coherent sheaves, and so on.  In our previous papers \cite{Schur,Splitting} we developed the theory of such 2-rigs, showing that if the field $k$ has characteristic zero, the free 2-rig on one object is a semisimple  category whose simple objects correspond to Young diagrams.  Here we further develop that theory and apply it to the 2-rig of representations of $\M(n,k)$.

However, two words of clarification are in order. First, if we treat $\M(n,k)$ as mere monoid, its category of representations on finite-dimensional vector space over $k$ can be extremely complicated.  Already when $n = 1$, each automorphism of the field $k$ gives a different one-dimensional representation, so we are led into issues of Galois theory.  To eliminate these complexities, we limit ourselves to `algebraic' representations.  Very concretely, an algebraic representation of $\M(n,k)$ on the vector space $k^m$ is a monoid homomorphism $\rho \maps \M(n,k) \to \M(m,k)$ such that each matrix entry of $\rho(x)$ is a \emph{polynomial} in the entries of $\M(n,k)$.   There is a 2-rig of algebraic representations of $\M(n,k)$, which we denote by $\Rep(\M(n,k))$.   Our goal is to show that $\Rep(\M(n,k))$ is characterized by an appealing universal property.

Second, our work \cite{Splitting} has uncovered several choices of what it can mean for an object of a 2-rig to have dimension $n$.  One can define the exterior powers $\Lambda^n(x)$ of any object $x$ in any 2-rig over a field of characteristic zero.  We then say an object $x$ has `bosonic dimension $n$' if $\Lambda^n(x)$ has an inverse with respect to the tensor product, and `bosonic subdimension $n$' if $\Lambda^{n+1}(x) \cong 0$.  For example, a vector bundle on a topological space may have different ranks on different connected components.  It has bosonic subdimension $n$ precisely when it has rank at most $n$ on every component, and bosonic dimension $n$ when it has rank exactly $n$ on every component.  The theory of 2-rigs has a kind of built-in `supersymmetry', so there are also fermionic versions of dimension and subdimension defined using symmetric powers, but we do not need these here, so we omit the adjective `bosonic' from now on.

Our main theorem is a strengthened version of an earlier conjecture \cite[Conj.\ 8.7]{Splitting}:

\begin{thm}
\label{thm:free_2-rig_on_object_of_subdimension_N} 
If $k$ is a field of characteristic zero, the 2-rig $\Rep(\M(n,k))$ is the free 2-rig on an object of subdimension $n$.  That is, given any 2-rig $\R$ containing an object $r$ of subdimension $n$, there is a map of 2-rigs $F \maps \Rep(\M(n,k)) \to \R$ with $F(k^n) = r$, unique up to a uniquely determined monoidal natural isomorphism.
\end{thm}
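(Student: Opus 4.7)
The plan is to construct the free 2-rig on an object of subdimension $n$ abstractly as a quotient of the free 2-rig on one object, and then identify this quotient with $\Rep(\M(n,k))$ via Tannaka reconstruction. Let $\ksbar$ denote the free 2-rig on one object $x$ from \cite{Schur,Splitting}, whose simple objects up to isomorphism are the Schur functors $S_\lambda(x)$ indexed by Young diagrams $\lambda$. Using the theory of 2-ideals and quotient 2-rigs developed earlier in the paper, form the 2-ideal $\mathcal{I} \subseteq \ksbar$ generated by $\Lambda^{n+1}(x) = S_{(1^{n+1})}(x)$, and let $\mathcal{Q} \coloneqq \ksbar / \mathcal{I}$ be the resulting quotient 2-rig. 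By construction, $\mathcal{Q}$ satisfies the desired universal property formally: any 2-rig map out of $\ksbar$ sending $x$ to an object $r$ with $\Lambda^{n+1}(r) \cong 0$ kills $\mathcal{I}$, so it factors uniquely (up to unique monoidal natural isomorphism) through $\mathcal{Q}$.

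The next step is to show $\mathcal{Q} \simeq \Rep(\M(n,k))$. The standard representation $k^n$ of $\M(n,k)$ satisfies $\Lambda^{n+1}(k^n) = 0$, so the universal property of $\mathcal{Q}$ produces a canonical 2-rig map $F \maps \mathcal{Q} \to \Rep(\M(n,k))$ sending $x$ to $k^n$. To prove $F$ is an equivalence, I would apply Tannaka reconstruction to both sides with respect to the evident fiber functors to $\Vect$ — each sending the distinguished generator to $k^n$ — and check that they reconstruct the same monoid scheme. On the $\Rep(\M(n,k))$ side this is essentially tautological: the reconstructed bialgebra is the polynomial algebra on the $n^2$ matrix entries, with comultiplication dual to matrix multiplication. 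On the $\mathcal{Q}$ side one uses the classical fact that the matrix coefficients of the Schur functors $S_\lambda(k^n)$ for partitions $\lambda$ with at most $n$ rows span the polynomial functions on $\M(n,k)$, i.e., this is the Schur-Weyl decomposition of the polynomial representations of $\GL(n,k)$ extended to the full matrix monoid.

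The main obstacle is the ideal-theoretic analysis underpinning both the quotient and the Tannakian comparison: one must show that $\mathcal{I}$ kills \emph{exactly} the Schur functors $S_\lambda$ with more than $n$ rows, so that $\mathcal{Q}$ is semisimple with simple objects the $S_\lambda(x)$ for $\lambda$ of at most $n$ rows. That $\mathcal{I}$ contains every such $S_\lambda$ with more than $n$ rows should follow from the Littlewood-Richardson (or Pieri) rule, which expresses $S_\lambda$ as a direct summand of a tensor product involving $\Lambda^{n+1}(x)$. That $\mathcal{I}$ contains \emph{no other} simples can be certified by exhibiting the 2-rig map $\ksbar \to \Vect$ sending $x \mapsto k^n$, under which $S_\lambda(k^n) \neq 0$ for every $\lambda$ with at most $n$ rows; this witnesses that these simples survive the quotient. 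Marrying this concrete representation theory of the symmetric and general linear groups with the abstract 2-ideal machinery, so as to obtain a genuine equivalence of 2-rigs rather than merely a numerical match of simple objects, is the technical heart of the argument, and is precisely where the Tannakian framework — rather than a purely combinatorial accounting — becomes essential.
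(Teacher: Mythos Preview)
Your proposal is correct and follows essentially the same route as the paper: form the quotient $\ksbar/\langle\Lambda^{n+1}\rangle$, identify the 2-ideal with the Schur functors having more than $n$ rows via Pieri together with nonvanishing of $S_\lambda(k^n)$ for $\lambda$ with at most $n$ rows, and then use Tannaka reconstruction on the resulting faithful exact fiber functor to $\Fin\Vect$ to recover the bialgebra $\O(\M(n,k))\cong\Sym(V^\ast\otimes V)$. The only cosmetic difference is that where you invoke the classical fact that matrix coefficients of the $S_\lambda(k^n)$ span polynomial functions on $\M(n,k)$, the paper instead computes the coend $\End(j)^\vee$ directly by Morita-reducing from $\ksbar$ to $k\S$ and summing over tensor powers to obtain $\sum_m \Sym^m((k^n)^\ast\otimes k^n)$; these are two phrasings of the same computation.
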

\noindent
Here a \define{map of 2-rigs} is a symmetric monoidal $k$-linear functor between 2-rigs.  

The proof goes roughly as follows.  We begin with the the free 2-rig on one object.  We call this $\ksbar$, since it can be obtained by the following three-step process \cite{Schur}:
\begin{itemize}
    \item First form the free symmetric monoidal category on one object $x$. This is equivalent to the groupoid of finite sets and bijections, which we call $\S$, with disjoint union providing the symmetric monoidal structure.
    \item Then form the free $k$-linear symmetric monoidal category on $\S$ by freely forming $k$-linear combinations of morphisms. This is called $k\S$.
    \item Then Cauchy complete $k\S$. The result, $\ksbar$, is the coproduct, as Cauchy complete $k$-linear categories, of the categories of finite-dimensional representations of all the symmetric groups $S_n$.
\end{itemize}

We then construct the free 2-rig on an object of subdimension $n$ by taking the `quotient' of $\ksbar$ by the `2-ideal' generated by the object $\Lambda^{n+1}(x)$, or $\Lambda^{n+1}$ for short.  We call this quotient 2-rig $\ksbar/\langle \Lambda^{n+1} \rangle$.  Of course, to define and work with this 2-rig we need to develop the theory of 2-ideals and quotient 2-rigs.   In the process we prove that $\ksbar/\langle \Lambda^{n+1} \rangle$ is a semisimple abelian category.

By the universal property of $\ksbar/\langle \Lambda^{n+1} \rangle$, there is a 2-rig map
\[
    \begin{tikzcd}
        \ksbar/ \langle \Lambda^{n+1} \rangle \ar[d, "j"] \\
        \Fin\Vect,
    \end{tikzcd}
\]
unique up to isomorphism, sending $x$ to the vector space $k^n$.  We prove that $j$ is faithful and exact.

Then we use Tannaka reconstruction.  To begin with, this theory implies that if a 2-rig $\R$ is an abelian category equipped with a faithful exact 2-rig map 
\[
    \begin{tikzcd}
        \R \ar[d, "U"] \\
        \Fin\Vect
    \end{tikzcd}
\]
then $\R$ is equivalent to the 2-rig $\Comod(B)$ of finite-dimensional comodules of some commutative bialgebra $B(U)$.  Furthermore, this bialgebra can be constructed as a coend
\[    B(U) = \int^{r \in \R} U(r) \otimes U(r)^\ast .\]
By calculating this coend in the case at hand we show that
\[    B(j) \cong S(k^n \otimes (k^n)^\ast)  \]
where $S$ stands for the symmetric algebra.   To understand the bialgebra structure on $B(j)$, note that the vector space $k^n \otimes (k^n)^\ast$ is isomorphic to $\M(n,k)^\ast$, so that its symmetric algebra is isomorphic to the algebra of functions on $\M(n,k)$ that are polynomials in the matrix entries:
\[     S(k^n \otimes (k^n)^\ast) \cong \O(\M(n,k)). \]
The latter algebra becomes a bialgebra with comultiplication coming from matrix multiplication, and we show that this gives the relevant bialgebra structure on $B(j)$.  We thus conclude that
\[    \ksbar/\langle \Lambda^{n+1} \rangle \simeq \Comod(\O(\M(n,k)))  \]
as 2-rigs.  Finally, we use the fact that algebraic representations of $\M(n,k)$ are equivalent to finite-dimensional comodules of $\mathcal{O}(\M(n,k))$, and obtain the desired equivalence of 2-rigs:
\[        \ksbar/\langle \Lambda^{n+1} \rangle \simeq \Rep(\M(n,k)) . \]

Here is the plan of the paper.  In \cref{sec:representation} we review the theory of affine monoids and their algebraic representations.   \cref{lem:commuting_triangle_of_2-rigs} makes precise the sense in which every every affine monoid $M$ has a 2-rig of representations that is equivalent to the 2-rig of finite-dimensional comodules of a cocommutative bialgebra $\O(M)$, not merely as abstract 2-rigs, but as 2-rigs over $\Fin\Vect$.  We pay special attention to the case where $M = \M(n,k)$ is the `full linear monoid' of the vector space $k^n$: the monoid of $n \times n$ matrices, treated as an affine monoid.  

In \cref{sec:Tannaka} we review the theory of Tannaka reconstruction.  This leads up to Theorems \ref{thm:Tannaka_1'} and \ref{thm:Tannaka_2'}, concerning an adjunction between the category of cocommutative bialgebras and the category of 2-rigs over $\Fin\Vect$.  These theorems give conditions under which a 2-rig over $\Fin\Vect$ is equivalent to the category of finite-dimensional comodules of some cocommutative bialgebra.  

In \cref{sec:quotient_2-rigs} we develop the theory of quotient 2-rigs and 2-ideals.   In \cref{thm:quotient_2-rig} we show that when a 2-rig $\R$ is also a semisimple abelian category, any 2-rig map $F \maps \R \to \R'$ factors as $\R \to \P \to \R'$ where $\P$ is the quotient of $\R$ by a 2-ideal, the `kernel' of $F$, and $\P$ itself is a semisimple abelian category. The 2-rig map $\R \to \P$ is essentially surjective and full, while $\P \to \R'$ is faithful.

In \cref{sec:quotients_of_the_free_2-rig} we apply the results of the previous section to 2-rig maps $\ksbar \to \Fin\Vect$.  We show that up to isomorphism there is one such map for each natural number, with the $n$th one, say $\phi_n \maps \ksbar \to \Fin\Vect$, sending the generating object $x \in \ksbar$ to the vector space $k^n$.  We show that the kernel of $\phi_n$ is the 2-ideal $\langle \Lambda^{n+1} \rangle$ and that the 2-rig $\ksbar/\langle \Lambda^{n+1} \rangle$ is a semisimple abelian category.

In \cref{sec:main_theorem} we prove our main theorem, \cref{thm:free_2-rig_on_object_of_subdimension_N}.  To do this, in \cref{lem:free_2-rig} we prove the intuitively obvious fact that $\ksbar/\langle \Lambda^{n+1} \rangle$ is the free 2-rig on an object of subdimension $n$.  Then we carry out the Tannaka reconstruction argument sketched above, to show that this 2-rig is equivalent to $\Rep(\M(n,k))$.

Finally, in \cref{sec:conclusions} we state some conjectures concerning the universal properties of the 2-rigs of representations of various classical groups.

\vskip 1em
\subsection*{Notation}
We use sans-serif font for 1-categories such as $\Vect$, and bold serif font for 2-categories such as $\TRig$.

\section{Representations of affine monoids}
\label{sec:representation}

There is a long tradition of studying representations of linear algebraic groups \cite{Borel,Humphreys,MilneAlgGp}, but more recently this line of work has been generalized to monoids  \cite{Brion, Renner}.   A `linear algebraic monoid' is simply a set of $n \times n$ matrices over $k$, closed under matrix multiplication, that is picked out by a collection of polynomial equations in the matrix entries.  

Formal aspects of the theory become easier if we work more generally with `affine monoids', namely  monoid objects in the category of affine schemes over $k$.  

\begin{defn}
The category $\Aff$ of \define{affine schemes over $k$}, or \define{affine schemes} for short, is the opposite of the category $\Comm\Alg$ of commutative algebras over $k$. 
\end{defn}

\begin{defn}
An \define{affine monoid scheme over $k$}, or  \define{affine monoid} for short, is a monoid internal to $(\Aff, \times)$. 
\end{defn}

Any finite-dimensional algebra over $k$ gives an affine monoid.  To see this, note that there is a symmetric lax monoidal functor
\[   \Phi \maps (\Fin\Vect, \otimes) \to (\Aff, \times) \]
given as the composite
\[    (\Fin\Vect, \otimes) \xrightarrow{(-)^\ast} 
(\Fin\Vect, \otimes)\op \xrightarrow{\Sym\op} (\Comm\Alg, \otimes)\op = (\Aff, \times). \]
In the first step, taking the dual is a symmetric strong monoidal functor from $(\Fin\Vect, \otimes)$ to $ (\Fin\Vect, \otimes)\op$. In the second step, $\Sym \maps (\Fin\Vect, \otimes) \to (\Comm\Alg, \otimes)$ sends any vector space $V$ to the free commutative algebra on $V$, also known as the symmetric algebra $\Sym(V)$.   This functor $\Sym$ is symmetric oplax monoidal since it is left adjoint to the forgetful functor $U \maps (\Comm\Alg, \otimes) \to (\Fin\Vect, \otimes)$, which is strong symmetric monoidal.   Thus $\Sym\op \maps (\Fin\Vect, \otimes)\op \to (\Comm\Alg, \otimes)\op$ is symmetric lax monoidal.  It follows that the composite $\Phi$ is symmetric lax monoidal.  Thanks to this fact, $\Phi$ sends internal monoids to internal monoids, so any finite-dimensional algebra $A$ over $k$ gives an affine monoid $\Phi(A)$.   

This construction yields the primordial example of an affine monoid, which is the subject of this paper:

\begin{defn}
For any finite-dimensional vector space $V$ over $k$, the \define{full linear monoid} $\M(V)$ is the affine monoid obtained by applying $\Phi$ to the algebra of endomorphisms of $V$.  In particular, the full linear monoid $\M(n,k)$ is the affine monoid obtained by applying $\Phi$ to the algebra of $n \times n$ matrices over $k$.
\end{defn}

To define the category of algebraic representations of an affine monoid, we can use the symmetric lax monoidal functor $\Phi$ to convert categories enriched in $\Fin\Vect$ into categories enriched in $\Aff$.

\begin{defn} 
\label{defn:finite-dim_linear_category}
    Let $\FFin\Lin\Cat$ be the 2-category of categories, functors and natural transformations enriched over $\Fin\Vect$. We call these \define{finite-dimensional linear} categories, \define{linear} functors and natural transformations. 
\end{defn}

\begin{defn}
\label{defn:algebraic_category}
    Let $\AAff\Cat$ be the 2-category of categories, functors and natural transformations enriched over $\Aff$. We call these \define{affine categories}, \define{algebraic functors} and natural transformations.
\end{defn}

\begin{lem}
\label{lem:base_change}
    Base change along $\Phi$ gives a 2-functor
    \[   (-)^\sim \, \maps
    \FFin\Lin\Cat \to \AAff\Cat \]
    sending any finite-dimensional linear category $\C$ to the affine category $\C^\sim$ that has the same objects, with hom-objects defined by 
    \[   \C^\sim(x,y) = \Phi(\C(x,y)),\]
    and composition and units defined using the functoriality of $\Phi$.
\end{lem}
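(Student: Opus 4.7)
The lemma is an instance of a very general principle in enriched category theory: any lax monoidal functor $F \maps (\V, \otimes) \to (\W, \otimes')$ between monoidal categories induces a 2-functor $F_* \maps \V\mhyphen\Cat \to \W\mhyphen\Cat$ between the associated 2-categories of enriched categories, enriched functors, and enriched natural transformations. Since $\Phi \maps (\Fin\Vect, \otimes) \to (\Aff, \times)$ has already been shown to be symmetric lax monoidal in the paragraph preceding the lemma, it suffices to spell out this base-change construction and verify the axioms.

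My plan is to define $(-)^\sim$ on each dimension of data and then check functoriality. For a finite-dimensional linear category $\C$, take $\C^\sim$ to have the same objects, with hom-objects $\C^\sim(x,y) := \Phi(\C(x,y))$. The composition
\[ \C^\sim(y,z) \times \C^\sim(x,y) = \Phi(\C(y,z)) \times \Phi(\C(x,y)) \to \Phi(\C(y,z) \otimes \C(x,y)) \to \Phi(\C(x,z)) \]
is built from the lax structure map of $\Phi$ followed by $\Phi$ applied to composition in $\C$; the unit at an object $x$ is the composite $1 \to \Phi(k) \to \Phi(\C(x,x))$ of the lax unit with $\Phi$ applied to the $\Fin\Vect$-enriched identity. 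Associativity and unitality of $\C^\sim$ then reduce, by a short diagram chase, to the lax monoidal coherence axioms for $\Phi$ together with the associativity and unitality axioms holding in $\C$. For a linear functor $F \maps \C \to \D$, define $F^\sim$ to agree with $F$ on objects, with hom-maps $\Phi(F_{x,y}) \maps \Phi(\C(x,y)) \to \Phi(\D(Fx,Fy))$; preservation of composition and units follows from naturality of the laxator. For a linear natural transformation $\alpha \maps F \To G$ with components $\alpha_x \maps k \to \D(Fx, Gx)$, define $\alpha^\sim_x$ as the composite $1 \to \Phi(k) \to \Phi(\D(Fx, Gx))$; enriched naturality of $\alpha^\sim$ follows from enriched naturality of $\alpha$ together with the laxator's naturality.

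Finally I would verify that $(-)^\sim$ respects the 2-categorical structure, namely that it preserves identities, horizontal and vertical composition of 2-cells, and whiskering. Each check is a straightforward consequence of $\Phi$ being a functor that is moreover lax monoidal, so that all coherence squares commute. There is no substantive obstacle here; the only mild subtlety is bookkeeping to confirm that the lax (rather than strong) monoidal structure already suffices for base change on enriched categories, which it does because composition is only required to be a morphism in the target monoidal category, not an iso-compatible one. Accordingly the proof is essentially a pointer to this general enriched-categorical fact, together with the observation that $\Phi$ meets its hypotheses.
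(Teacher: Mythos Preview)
Your proposal is correct: this is exactly the standard base-change construction for enriched categories along a lax monoidal functor, and your outline of how to define $(-)^\sim$ on objects, 1-cells, and 2-cells, together with the reduction of all axioms to the lax monoidal coherence of $\Phi$, is accurate. The paper's own proof is simply a citation to Lemma~5.8 of \cite{Splitting}, so you have effectively reconstructed the content behind that reference rather than taking a different route.
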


\begin{proof}  This is Lemma 5.8 of \cite{Splitting}.
\end{proof}

\begin{defn} 
\label{defn:algebraic_representation}
Given an affine category $\C$ let 
\[   \Rep(\C) = \AAff\Cat(\C, \Fin\Vect^\sim). \]
The objects of $\Rep(\C)$ are algebraic functors $F \maps \C \to \Fin\Vect^\sim$, which we call \define{algebraic representations} of $\C$, and the morphisms are natural transformations between these, which we call \define{maps} between algebraic representations.
\end{defn}

We are especially interested in algebraic representations of affine monoids, which can be seen as one-object affine categories.  For this it is important that the category of affine monoids is the opposite of the category of commutative bialgebras over $k$.  To see this, note that $(\Aff, \times) \simeq (\Comm\Alg, \otimes)\op$, where $\otimes$ denotes the tensor product of commutative algebras over $k$, which is their coproduct. But a monoid in $(\Comm\Alg, \otimes)\op$ is the same as a comonoid in $(\Comm\Alg, \otimes)$, and the latter is exactly a commutative bialgebra.   Thus, any affine monoid has a corresponding commutative bialgebra, which we call its \define{coordinate bialgebra} $\mathcal{O}(M)$.

\begin{lem}
\label{lem:commuting_triangle_of_categories}
The category $\Rep(M)$ of algebraic representations of an affine monoid $M$ is equivalent, as a $k$-linear category over $\Fin\Vect$, to the category $\Comod(\O(M))$ of finite-dimensional comodules of its coordinate bialgebra.  There is a strictly commuting triangle of $k$-linear functors
\[
\begin{tikzcd}[column sep=-0.7em]
    \Rep(M) 
    \ar{rr}{\cong}[swap]{E}
    \ar[dr,swap,"W"]
    & &
    \Comod(\O(M))
    \ar[dl,"U"]
     \\
   & \Fin\Vect
\end{tikzcd}
\]
where $E$ an isomorphism and $U$ (resp.\ $W$) is the forgetful functor sending a comodule (resp.\ algebraic representation) to its underlying vector space.
\end{lem}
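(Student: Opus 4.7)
The plan is to unpack both categories via the contravariant equivalence between affine schemes and commutative algebras and then match their data. An algebraic representation of $M$ (viewed as a one-object affine category) is, by \cref{defn:algebraic_representation}, an algebraic functor $F \maps M \to \Fin\Vect^\sim$; specifying its value on the unique object to be a finite-dimensional vector space $V$, the functor structure reduces to a morphism of affine monoids $\rho \maps M \to \Phi(\End(V)) = \M(V)$. Applying the contravariant equivalence $(\Aff, \times) \simeq (\Comm\Alg, \otimes)\op$ and noting that $\O(\M(V)) = \Sym(\End(V)^\ast)$, this is the same as a morphism of commutative bialgebras $\Sym(\End(V)^\ast) \to \O(M)$.

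On the comodule side, a finite-dimensional $\O(M)$-comodule is a linear map $\delta \maps V \to V \otimes \O(M)$ satisfying coassociativity and counit. By tensor-hom adjunction this is equivalent to a linear map $\End(V)^\ast \cong V^\ast \otimes V \to \O(M)$, and by the universal property of $\Sym$ this extends uniquely to a commutative algebra map $\Sym(\End(V)^\ast) \to \O(M)$. The substantive step — and the main obstacle — is to verify that the coassociativity and counit conditions for $\delta$ correspond precisely to the condition that this algebra map respects the comultiplication and counit of $\Sym(\End(V)^\ast)$; this is where one must make explicit that the bialgebra structure on $\O(\M(V))$ encodes matrix multiplication and the identity matrix, matching the coalgebraic side of the comodule axioms term-for-term.

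For morphisms, a natural transformation between algebraic functors $(V, \rho) \to (V', \rho')$ is, on the single object of $M$, just a linear map $f \maps V \to V'$, and the naturality condition at a general point of $M$ translates, via the same universal property of $\Sym$, to the standard comodule intertwining equation between the corresponding coactions. Defining $E$ to send $(V, \rho)$ to the comodule $(V, \delta)$ with the same underlying vector space then yields an isomorphism of $k$-linear categories (not merely an equivalence), since the bijections on objects and on morphisms are produced by invertible translations that fix $V$ and $f$. Finally, because both $U$ and $W$ are by definition the functors extracting the underlying vector space, the triangle $U \circ E = W$ commutes strictly. Once the universal property of $\Sym$ and the matrix-multiplication origin of the comultiplication on $\O(\M(V))$ are in hand, the remainder is a routine bookkeeping exercise.
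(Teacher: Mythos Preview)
Your proposal is correct and is essentially the argument underlying the result the paper cites: the paper's own proof simply refers to Lemma~5.14 of \cite{Splitting} and adds the observation that the equivalence constructed there fixes the underlying vector space, hence is an isomorphism making the triangle commute strictly. You have reconstructed that argument---matching affine monoid maps $M \to \M(V)$ with bialgebra maps $\Sym(\End(V)^\ast) \to \O(M)$ and then with comodule structures on $V$---and correctly isolated the only nontrivial verification (that the comodule axioms for $\delta$ correspond to preservation of comultiplication and counit), which the paper itself later unpacks in the Tannaka section via the equivalence between comodule structures and coalgebra maps out of the coendomorphism coalgebra $V^\ast \otimes V$.
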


\begin{proof} 
An equivalence between $\Rep(M)$ and $\Comod(\O(M))$ was constructed in the proof of Lemma 5.14 of \cite{Splitting}, but in fact the equivalence constructed there is an isomorphism making the above triangle strictly commute, since it sends any algebraic representation of $M$ on a finite-dimensional vector space $V$ to a comodule with the same underlying vector space.
\end{proof}

Next we enhance the triangle in \cref{lem:commuting_triangle_of_categories} to a commuting triangle of 2-rig maps.  First, note that for any commutative bialgebra $C$, $\Comod(C)$ with the usual tensor product of comodules is an \define{abelian 2-rig}: a 2-rig that is also an abelian category.  Second, note that the forgetful functor
\[        U \maps \Comod(C) \to \Fin\Vect  \]
is faithful and exact (preserves exact sequences).  It becomes a symmetric strict monoidal $k$-linear functor if we define the symmetric monoidal structure on $\Comod(C)$ as follows: the tensor product of two comodules $(V, \gamma \maps V \to V \otimes C)$ and $(V', \gamma' \maps V' \to V' \otimes C)$ is $V \otimes V'$ at the underlying vector space level, with comodule structure being the obvious composite 
\[
\begin{tikzcd}
    V \otimes V' \ar[r, "\gamma \otimes \gamma'"] & V \otimes C \otimes V' \otimes C \ar[r, "1 \otimes \sigma \otimes 1"] & V \otimes V' \otimes C \otimes C \ar[r, "1 \otimes 1 \otimes m"] & V \otimes V' \otimes C
\end{tikzcd}
\]
where $\sigma$ denotes a symmetry isomorphism and $m$ denotes the algebra multiplication for $C$. The symmetry $V \otimes V' \cong V' \otimes V$ at the level of underlying vector spaces is a comodule map if commutativity of $m$ is assumed.  It therefore follows that 
\[        U \maps \Comod(C) \to \Fin\Vect  \]
is a (strict monoidal) map of 2-rigs.   Summarizing:

\begin{lem}
\label{lem:Comod}
For any commmutative bialgebra $C$, $U \maps \Comod(C) \to \Fin\Vect$ is strict monoidal, exact, faithful 2-rig map between abelian 2-rigs.
\end{lem}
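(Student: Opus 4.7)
The plan is to verify each clause of the statement directly from the definitions assembled in the preceding paragraphs, exploiting the fact that every piece of structure on $\Comod(C)$ is built at the level of underlying vector spaces with the coaction riding along. Strict monoidality then reduces to observing that, by construction, the underlying vector space of $(V,\gamma) \otimes (V',\gamma')$ is literally $V \otimes V'$, and the associator, unitors, and symmetry were defined to be those of $\Fin\Vect$ (the latter being a well-defined comodule map precisely because of the commutativity assumption on $C$, as already noted in the excerpt). The $k$-linear structure on hom-sets is likewise inherited: $\Comod(C)((V,\gamma),(V',\gamma'))$ is visibly a $k$-linear subspace of $\Fin\Vect(V,V')$, since the compatibility condition $\gamma' \circ f = (f \otimes 1_C) \circ \gamma$ is linear in $f$.

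Faithfulness is immediate from the very definition of a comodule morphism: such a morphism \emph{is} an underlying linear map satisfying the coaction compatibility, and $U$ returns precisely that linear map.

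For the abelian structure of $\Comod(C)$ and the exactness of $U$, I would argue that kernels and cokernels in $\Comod(C)$ are computed underlying. Given a comodule map $f \maps (V,\gamma) \to (V',\gamma')$, the subspace $K = \ker(f) \subseteq V$ is carried into $K \otimes C$ by $\gamma$, because $(f \otimes 1_C) \circ \gamma = \gamma' \circ f$ vanishes on $K$; hence $K$ inherits a unique coaction making the inclusion $K \hookrightarrow V$ a comodule map, and this realizes the kernel of $f$ in $\Comod(C)$. The argument for cokernels is dual, using that $\gamma'$ descends to the quotient $V'/\im(f)$ because $\gamma' \circ f$ factors through $\im(f) \otimes C$. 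The universal properties in $\Comod(C)$ follow from those in $\Fin\Vect$ after one checks that the mediating linear maps are automatically comodule maps, which is routine. This simultaneously shows that $\Comod(C)$ is abelian and that $U$ preserves kernels and cokernels, hence is exact.

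Finally, Cauchy completeness of $\Comod(C)$ — which is needed to call it a 2-rig — is automatic from abelianness, since abelian categories have finite biproducts and split idempotents. Thus $U$ is a map of abelian 2-rigs with all the claimed properties. There is no serious obstacle; the main thing to guard against is a sign or convention slip in verifying that the underlying symmetry $V \otimes V' \to V' \otimes V$ commutes with the tensor product coaction, which is exactly where commutativity of the multiplication on $C$ enters — and this has already been recorded in the text introducing the tensor product on $\Comod(C)$.
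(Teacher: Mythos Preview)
Your proposal is correct and takes essentially the same approach as the paper. The paper in fact states the lemma as a summary (``Summarizing:'') of facts asserted without proof in the paragraph immediately preceding it---that $\Comod(C)$ is an abelian 2-rig, that $U$ is faithful and exact, and that the tensor product was defined so as to make $U$ strict symmetric monoidal---so your verification of kernels, cokernels, and Cauchy completeness simply fills in details the paper leaves to the reader.
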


Next suppose $C = \O(M)$ for an affine monoid $M$.  
If we transfer the 2-rig structure from $\Comod(\O(M))$ to $\Rep(M)$ using the isomorphism $E$ in \cref{lem:commuting_triangle_of_categories} we get the following result:

\begin{lem}
\label{lem:commuting_triangle_of_2-rigs}
For any affine monoid $M$,
\[
\begin{tikzcd}[column sep=0em]
    \Rep(M) 
    \ar{rr}{\cong}[swap]{E}
    \ar[dr,swap,"W"]
    & &
    \Comod(\O(M))
    \ar[dl,"U"]
     \\
   & \Fin\Vect
\end{tikzcd}
\]
is a strictly commuting triangle where all the arrows are strict monoidal exact maps between abelian 2-rigs.  $U$ and $V$ are faithful, while $E$ is an isomorphism.
\end{lem}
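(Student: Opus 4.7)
The plan is to transport the 2-rig structure on $\Comod(\O(M))$ along the $k$-linear isomorphism $E$ of \cref{lem:commuting_triangle_of_categories}, and then read off the desired properties of the triangle from \cref{lem:Comod} combined with strict commutativity of the underlying triangle of $k$-linear functors.

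Concretely, I would first record that by \cref{lem:commuting_triangle_of_categories} the triangle commutes strictly at the level of $k$-linear functors, with $E$ an isomorphism and $W = U \circ E$, and by \cref{lem:Comod}, $\Comod(\O(M))$ is an abelian 2-rig with $U$ strict symmetric monoidal, exact, and faithful. I would then define the symmetric monoidal structure on $\Rep(M)$ by transport along $E$: set $\rho \otimes \rho' := E^{-1}(E(\rho) \otimes E(\rho'))$ with unit $E^{-1}(I)$, and transport the associator, unitors, and symmetry isomorphisms in the same fashion. By construction this makes $E$ strict symmetric monoidal, and because $k$-linear isomorphisms reflect Cauchy completeness and abelian structure, $\Rep(M)$ thereby becomes an abelian 2-rig and $E$ an isomorphism of abelian 2-rigs (in particular exact, since any $k$-linear isomorphism of abelian categories preserves all finite limits and colimits).

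The remaining properties of $W = U \circ E$ are then automatic: $W$ is strict monoidal and exact as a composite of two such functors, and faithful as the composite of a faithful functor with an isomorphism. I do not anticipate any real obstacle here; the substance of the lemma is essentially the bookkeeping of repackaging \cref{lem:commuting_triangle_of_categories,lem:Comod} as a single statement about a triangle of abelian 2-rigs. The only point worth watching is that the symmetric monoidal structure transported to $\Rep(M)$ is the one intended, but this is forced by the requirement that $W$ be strict symmetric monoidal together with strict commutativity of the triangle.
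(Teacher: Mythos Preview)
Your proposal is correct and follows exactly the approach the paper takes: the paper's entire argument is the single sentence before the lemma, namely ``transfer the 2-rig structure from $\Comod(\O(M))$ to $\Rep(M)$ using the isomorphism $E$,'' and your write-up simply unpacks that transport of structure in detail and reads off the properties of $W = U \circ E$ from \cref{lem:Comod}.
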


The central focus of this paper is the case where $M$ is the full linear monoid $\M(N,k)$.  In \cite[Ex. 5.10]{Splitting} we discussed the coordinate bialgebra of this affine monoid, and we expand on that material here.

\begin{ex}
\label{ex:full_linear_monoid}
The vector space of $N \times N$ matrices has a basis of elementary matrices, so its dual has a dual basis, say $e^i_j$ for $1 \le i, j \le N$.  The coordinate bialgebra $\O(\M(n,k))$ is the polynomial algebra on these elements $e^i_j$, with comultiplication given by
\[   \Delta(e^i_j) = \sum_{k = 1}^N e^i_k \otimes e^k_j \] 
and counit given by
\[   \varepsilon(e^i_j) = \delta^i_j \]
where $\delta$ is the Kronecker delta.

This example also has a useful basis-independent description.  To give this we begin by associating to any finite-dimensional vector space $V$ a coalgebra $V^\ast \otimes V$.   

\begin{defn}
\label{defn:coendomorphism_coalgebra}
For any finite-dimensional vector space $V$, the dual of the algebra of endomorphisms $\Fin\Vect(V, V) \cong V \otimes V^\ast$ is the
\define{coendomorphism coalgebra}
\[    (V \otimes V^\ast)^\ast \cong V^\ast \otimes V .\] 
\end{defn}

To explicitly describe the comultiplication and counit in the coendomorphism coalgebra, we use the fact that any finite-dimensional vector space $V$ and its dual $V^\ast$ are equipped with a counit
\begin{equation}
\label{eq:varepsilon}
\begin{array}{cccl}
\varepsilon \maps & V^\ast \otimes V & \to & k \\
            & f \otimes v & \mapsto & f(v) 
\end{array}
\end{equation}
and unit
\begin{equation}
\label{eq:eta}
\begin{array}{ccccl}
\eta \maps & k & \to & \Fin\Vect(V,V) & \cong V \otimes V^\ast \\
& 1 & \mapsto & 1_V  
\end{array}
\end{equation}
obeying the triangle equations in the definition of an adjunction.  An adjunction in any bicategory gives a comonad in that bicategory.  As a consequence, the adjunction between $V$ and $V^\ast$ makes into a coalgebra whose comultiplication $\Delta$ is the composite
\[
\begin{tikzcd}
V^\ast \otimes V \arrow[r, "\cong"]
\arrow[rr, bend left = 25, "\Delta"] 
& 
 V^\ast \otimes k \otimes V 
\arrow[r,"1 \otimes \eta \otimes 1"] 
& 
V^\ast \otimes V \otimes V^\ast \otimes V
\end{tikzcd}
\]
and whose counit is $\varepsilon$.

The coordinate bialgebra of $\M(V)$ is canonically isomorphic to the free commutative algebra on $V^\ast \otimes V$:
\[  \O(\M(V)) \cong \Sym((V \otimes V^\ast)^\ast)
\cong \Sym(V^\ast \otimes V)  .\]
Since $\Sym \maps \Vect \to \Vect$ is oplax monoidal, it maps coalgebras to coalgebras.  This makes $\Sym(V^\ast \otimes V)$ and thus $\O(\M(V))$ into a coalgebra.   

Thus, the comultiplication and counit for $V^\ast \otimes V$ extend uniquely to algebra homomorphisms
\[    \Sym(V^\ast \otimes V) \to \Sym(V^\ast \otimes V) \otimes \Sym(V^\ast \otimes V) , \]
\[   \Sym(V^\ast \otimes V) \to k \]
which we again call these $\Delta$ and $\varepsilon$, and these give $O(\M(V)) \cong \Sym(V^\ast \otimes V)$ its coalgebra structure.  
\end{ex}

\section{Tannaka reconstruction} 
\label{sec:Tannaka}

Thanks to \cref{lem:commuting_triangle_of_2-rigs}, to prove our main theorem we just need to show that the free 2-rig on an object of subdimension $N$ is the 2-rig of finite-dimensional comodules of the commutative bialgebra $\O(\M(N,k))$.   For this we use Tannaka reconstruction.  This characterizes 2-rigs of finite-dimensional comodules of commutative bialgebras, and lets us reconstruct a commutative bialgebra from its 2-rig of finite-dimensional comodules. 

Here we recall the facts we need about Tannaka reconstruction; for more details see the papers by Deligne \cite{Deligne}, Deligne--Milne \cite{DeligneMilne}, and Joyal--Street \cite{JoyalStreet}.   We start with a simplified version that works for coalgebras, and then turn to commutative bialgebras.

Let $k$ be a field.  Let $\LinCat \downarrow \Fin\Vect$ be the category whose objects are $k$-linear categories $\C$ equipped with a $k$-linear functor $U \maps \C \to \Fin\Vect$, and whose morphisms $(\C, U) \to (\D, V)$ are $k$-linear functors $F \maps \C \to \D$ such that $U = V \circ F$ (strictly). Let $\Coalg$ be the category of coalgebras over $k$. There is a functor 
\[
\Comod \maps \Coalg \to \LinCat \downarrow \Fin\Vect
\]
taking a coalgebra $C$ to the category $\Comod(C)$ of its finite-dimensional right comodules together with the forgetful functor $U \maps \Comod(\C) \to \Fin\Vect$. As we shall see, for more or less tautological reasons, there is a left adjoint to $\Comod$, 
\[  
\End^\vee \maps \LinCat \downarrow \Fin\Vect \to \Coalg
\]
taking an object $(\C, U)$ to a coalgebra that will be denoted as $\End(\C, U)^\vee$, or just $\End(U)^\vee$. Tannaka reconstruction for coalgebras then comes in two parts. Here is the first part:

\begin{thm}[\textbf{Tannaka Reconstruction for Coalgebras 1}]
\label{thm:Tannaka_1} 
The counit of the adjunction $\End^\vee \dashv \Comod$, evaluated at any coalgebra $C$, is an isomorphism \hfill \break  $\End(\Comod(C), U)^\vee \to C$. 
\end{thm}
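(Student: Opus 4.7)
My approach is to give an explicit description of the counit via a coend presentation of the left adjoint, and then verify it is an isomorphism by constructing a two-sided inverse. Concretely, for an object $(\C, U)$ of $\LinCat \downarrow \Fin\Vect$, the left adjoint is represented by the coend
\[
\End(U)^\vee \;=\; \int^{V \in \C} U(V) \otimes U(V)^\ast,
\]
equipped with counit induced by the evaluations $\mathrm{ev} \maps U(V) \otimes U(V)^\ast \to k$ and comultiplication built from the coevaluations, as in \cref{defn:coendomorphism_coalgebra}. Specializing to $(\Comod(C), U)$, the counit $\alpha \maps \End(\Comod(C), U)^\vee \to C$ is the dinatural map whose $V$-component sends $v \otimes \phi \in U(V) \otimes U(V)^\ast$ to $(\phi \otimes 1_C)\,\rho_V(v) \in C$, where $\rho_V \maps V \to V \otimes C$ is the coaction. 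Dinaturality reduces to the fact that morphisms in $\Comod(C)$ are comodule maps, and $\alpha$ being a coalgebra map is a routine compatibility of the duality-based formulas on the two sides.

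To prove $\alpha$ is bijective I would construct an explicit inverse $\beta \maps C \to \End(\Comod(C), U)^\vee$ using the fundamental theorem of coalgebras: every $c \in C$ lies in a finite-dimensional subcoalgebra $D \subset C$, which is automatically a right $C$-comodule via $\Delta|_D$. Set $\beta(c) = [c \otimes \varepsilon_D]$, the class in the coend of $c \otimes \varepsilon_D \in U(D) \otimes U(D)^\ast$, where $\varepsilon_D$ is the restriction of the counit of $C$ to $D$. Then $\alpha(\beta(c)) = (\varepsilon_D \otimes 1_C)\Delta(c) = c$ by the counit law, giving $\alpha \beta = 1_C$. For $\beta \alpha = 1$, given a representative $v \otimes \phi \in U(V) \otimes U(V)^\ast$, one factors $\rho_V$ through the \emph{coefficient subcoalgebra} $D' \subset C$ (the finite-dimensional span of the matrix coefficients of $V$) and applies dinaturality along the resulting comodule map to identify $\beta(\alpha(v \otimes \phi))$ with $[v \otimes \phi]$ inside the coend.

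The main obstacle is verifying $\beta$ is well defined: if $c \in D_1 \cap D_2$ for two choices of finite-dimensional subcoalgebra, one must show $[c \otimes \varepsilon_{D_1}] = [c \otimes \varepsilon_{D_2}]$ in the coend. Passing to $D_1 + D_2$, still a finite-dimensional subcoalgebra, the inclusions $\iota_i \maps D_i \hookrightarrow D_1 + D_2$ are comodule maps, and dinaturality along $\iota_i$ yields $[c \otimes \varepsilon_{D_i}] = [c \otimes \varepsilon_{D_1 + D_2}]$, since $U(\iota_i)(c) = c$ while $U(\iota_i)^\ast(\varepsilon_{D_1 + D_2}) = \varepsilon_{D_i}$. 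Once $\alpha$ is known to be a $k$-linear isomorphism, its compatibility with coalgebra structure is immediate: both the source and target carry comultiplications and counits defined by the same duality data, which agree under the identification $\rho_D = \Delta|_D$ for finite-dimensional subcoalgebras, and this suffices because the coalgebra structure on the coend is determined by its restrictions to such $D$.
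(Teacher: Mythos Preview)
Your argument is correct and complete in outline. The one place that could use a little more detail is $\beta\alpha = 1$: you should name the comodule map along which dinaturality is applied. For fixed $\phi \in U(V)^\ast$, the map $f_\phi \maps V \to C$ given by $v \mapsto (\phi \otimes 1_C)\rho_V(v)$ is a $C$-comodule map landing in the coefficient subcoalgebra $D'$, and the counit axiom for comodules gives $f_\phi^\ast(\varepsilon_{D'}) = \phi$; dinaturality along $f_\phi$ then yields $[v \otimes \phi] = [f_\phi(v) \otimes \varepsilon_{D'}] = \beta(\alpha([v \otimes \phi]))$. Also, to match the paper's conventions you should write the coend as $\int^c U(c)^\ast \otimes U(c)$ rather than $\int^V U(V) \otimes U(V)^\ast$.

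The paper does not actually prove this theorem in the body: after describing the adjunction and the counit $\Phi_C$ it explicitly defers to Deligne, Deligne--Milne, and Joyal--Street. The source does, however, contain a commented-out proof taking a different route: rather than constructing an explicit inverse, it shows that the right adjoint $\Comod$ is fully faithful (equivalent to the counit being an isomorphism) by extending from $\Comod(C)$ to all comodules via filtered colimits and identifying lifts of the forgetful functor $\Vect_C \to \Vect$ through $\Vect_D \to \Vect$ with comonad maps $C \otimes - \Rightarrow D \otimes -$, hence with coalgebra maps $C \to D$. Your direct approach is more elementary and entirely self-contained, needing only the fundamental theorem of coalgebras and explicit dinaturality bookkeeping; the full-faithfulness approach is conceptually cleaner but requires the passage to the Ind-completion and the comonadic description of $\Vect_C$.
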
 

For the second part, observe that $\Comod(C)$ is an abelian category for any coalgebra $C$, and that the forgetful functor $U \maps \Comod(U) \to \Fin\Vect$ is faithful and exact. 

\begin{thm}[\textbf{Tannaka Reconstruction for Coalgebras 2}] 
\label{thm:Tannaka_2}
The unit of the adjunction $\End^\vee \dashv \Comod$, evaluated at any pair $(\C, U)$ where $\C$ is an abelian category and $U$ is faithful and exact, is an equivalence of $k$-linear categories $\C \to \Comod(\End(U)^\vee)$.
\end{thm}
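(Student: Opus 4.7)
The plan is to describe the unit functor $\tilde U \maps \C \to \Comod(\End(U)^\vee)$ explicitly, and then verify in turn that it is faithful, full, and essentially surjective, yielding the desired equivalence of $k$-linear categories. Writing $\End(U)^\vee$ as the coend
\[
    \End(U)^\vee \;=\; \int^{c \in \C} U(c)^\ast \otimes U(c),
\]
its coprojections endow each $U(c)$ with a canonical right coaction $\gamma_c \maps U(c) \to U(c) \otimes \End(U)^\vee$; the unit $\tilde U$ sends $c$ to the comodule $(U(c), \gamma_c)$ and acts as $U$ does on morphisms, so composition with the forgetful functor $\Comod(\End(U)^\vee) \to \Fin\Vect$ recovers $U$ strictly.

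Faithfulness of $\tilde U$ is immediate, since its composite with this forgetful functor is $U$, which is faithful by hypothesis. Fullness follows from a wedge argument for the coend: a linear map $f \maps U(c) \to U(c')$ commutes with the coactions $\gamma_c$ and $\gamma_{c'}$ precisely when the corresponding element of $U(c)^\ast \otimes U(c')$ satisfies the dinaturality condition defining the coend, and this forces $f = U(g)$ for a unique $g \maps c \to c'$ in $\C$. In essence this is \cref{thm:Tannaka_1} applied at the level of hom-spaces, and it does not use the exactness of $U$.

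The main obstacle is essential surjectivity, and here both hypotheses---that $\C$ is abelian and that $U$ is exact---become decisive. Given a finite-dimensional comodule $(V, \gamma)$ of $\End(U)^\vee$, I would present $V$ as a cokernel of a morphism between objects in the image of $\tilde U$. Pick a basis $v_1, \dots, v_n$ of $V$ and expand $\gamma(v_i) = \sum_j v_j \otimes a_{ji}$; each $a_{ji}$ is a finite sum of coend classes $[\phi \otimes u]$ with $\phi \in U(c)^\ast$ and $u \in U(c)$ for various $c$. Bundling the finitely many $c$'s that appear into a single direct sum in $\C$ yields an object $c_0 \in \C$ together with a comodule surjection $p \maps \tilde U(c_0) \twoheadrightarrow V$. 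Applying the same construction to $\ker(p)$, which is again a finite-dimensional comodule, produces a further map $\tilde U(c_1) \to \tilde U(c_0)$ whose comodule cokernel is $V$; by fullness this map equals $\tilde U(g)$ for a unique $g \maps c_1 \to c_0$ in $\C$. Setting $c = \mathrm{coker}(g)$ in the abelian category $\C$ and applying the exactness of $U$ identifies $\tilde U(c)$ with the comodule cokernel of $\tilde U(g)$, which is $V$. This produces the required preimage, and the three properties together give the desired equivalence. Exactness of $U$ is precisely what is needed to guarantee that $\tilde U$ preserves the cokernel used in the last step; without it, the reconstruction would fail at this point.
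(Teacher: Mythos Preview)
The paper does not supply its own proof of this theorem; immediately after stating it, the authors write that ``proofs of these results can be found in the references'' and only describe the adjunction itself.  So there is nothing in the paper to compare against, and the question becomes whether your sketch stands on its own.

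Your overall strategy (faithful, full, essentially surjective) is the standard one, and faithfulness is indeed immediate.  But two of your steps have real gaps.

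\textbf{Fullness.}  You claim that a linear map $f\colon U(c)\to U(c')$ commuting with the coactions ``satisfies the dinaturality condition defining the coend, and this forces $f=U(g)$.''  This is the wrong direction.  Dinaturality of the coprojections tells you that every $U(g)$ \emph{is} a comodule map; it says nothing about the converse.  Concretely, the comodule condition on $f$ unwinds to the equality
\[
\iota_{c'}\circ(1\otimes f)=\iota_{c}\circ(f^\ast\otimes 1)\colon U(c')^\ast\otimes U(c)\to \End(U)^\vee,
\]
i.e.\ two maps into the coend agree.  That is a statement about images in the quotient, not a recipe for producing a morphism $g\colon c\to c'$ in $\C$.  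In the standard references fullness is obtained using abelianness and exactness (for instance by realizing the graph of $f$ as a subcomodule of $\tilde U(c\oplus c')$ and lifting it to a subobject of $c\oplus c'$), so your parenthetical remark that fullness ``does not use the exactness of $U$'' is not correct.  The appeal to \cref{thm:Tannaka_1} ``at the level of hom-spaces'' is also unexplained; that theorem concerns the counit at a coalgebra, not hom-spaces between comodules.

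\textbf{Essential surjectivity.}  You assert that bundling the finitely many objects $c$ appearing in coend representatives of the $a_{ji}$ ``yields a comodule surjection $p\colon\tilde U(c_0)\twoheadrightarrow V$,'' but you do not say what $p$ is.  Knowing that each $a_{ji}$ lifts to $U(c_0)^\ast\otimes U(c_0)$ gives elements of the coalgebra, not a comodule map out of $\tilde U(c_0)$; in particular the image of the coprojection $\iota_{c_0}$ need not be a subcoalgebra, so $V$ need not even be a $U(c_0)^\ast\otimes U(c_0)$-comodule.  The usual route is to embed $V$ into the cofree comodule $V_0\otimes C$ via the coaction, invoke the fundamental theorem of comodules to land in $V_0\otimes C'$ for a finite-dimensional subcoalgebra $C'$, and then relate $C'$ to the images of the $\iota_c$.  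Some such argument is needed before your cokernel step can begin; once you have the surjection, the rest of your paragraph (iterating to get a presentation, using fullness and exactness to transport the cokernel) is fine.
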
 

Proofs of these results can be found in the references \cite{Deligne,DeligneMilne,JoyalStreet}.  Here we just describe the adjunction between $\Comod$ and $\End$, which we need to concretely reconstruct a coalgebra from its 2-rig of representations.

For any coalgebra $C$, let $\Comod(C)$ denote its category of finite-dimensional right comodules, and $U \maps \Comod(C) \to \Fin\Vect$ the forgetful functor to finite-dimensional spaces.  Each object $c \in \Comod(C)$ has a comodule structure, which is a linear map 
\[
\gamma_c \maps U(c) \to U(c) \otimes C ,
\]
which induces a linear map 
\[  \phi_c \maps U(c)^\ast \otimes U(c) \to C \]
where $U(c)^\ast \otimes U(c)$ is the coendomorphism coalgebra of $U(c)$, as introduced in \cref{defn:coendomorphism_coalgebra}.  Here $\phi_c$
is the composite 
\[
\begin{tikzcd}
    U(c)^\ast \otimes U(c) 
    \ar[r, "1 \otimes \gamma_c"] & 
    U(c)^\ast \otimes U(c) \otimes C 
    \ar[r, "\varepsilon \otimes 1"] & 
    k \otimes C \cong C
\end{tikzcd}
\]
where $\varepsilon$ is defined as in \cref{eq:varepsilon}. In fact, linear maps of type $\phi_c$ correspond bijectively to linear maps of type $\gamma_c$, using the adjunction isomorphism
\[\hom(U(c)^\ast \otimes Uc, C)  \;\, \cong \;\; \hom(U(c), U(c) \otimes C).\]
The condition that $\gamma \maps U(c) \to C \otimes U(c)$ is a comodule structure is equivalent to the condition that $\phi \maps U(c)^\ast \otimes U(c) \to C$ is a coalgebra map; this is dual to how $R$-module structures $R \otimes V \to V$ correspond to $R$-algebra maps $R \to \hom(V, V)$. Thus, comodule structures $\gamma_c \maps U(c) \to U(c) \otimes C$ are in bijection with coalgebra maps $\phi_c \maps U(c)^\ast \otimes U(c) \to C$. 

Moreover, the assignment $c \mapsto \gamma_c$ is a transformation that is natural with respect to comodule maps $f \maps c \to d$, simply by virtue of the definition of comodule map. It follows that the indexed family of coalgebra maps $c \mapsto \phi_c$, indexed over objects $c$, is \emph{dinatural} with respect to comodule maps $f \maps c \to d$.  This family therefore corresponds to a uniquely determined linear map 
\[
\Phi_C \maps \int^{c: \Comod(C)} U(c)^\ast \otimes U(c) \to C.
\]
and this linear map is a coalgebra map, with respect to the coalgebra structure on the coend whose comultiplication 
\[
\left[\int^c U(c)^\ast \otimes U(c)\right] \longrightarrow \left[\int^c U(c)^\ast \otimes U(c)\right] \otimes \left[\int^c U(c)^\ast \otimes U(c)\right]
\]
corresponds to the dinatural family expressed by the composite 
\[
U(c)^\ast \otimes U(c) \overset{1 \otimes \eta \otimes 1}{\longrightarrow} U(c)^\ast \otimes U(c) \otimes U(c)^\ast \otimes U(c) \overset{i \otimes i}{\longrightarrow} \left[\int^c U(c)^\ast \otimes U(c)\right] \otimes \left[\int^c U(c)^\ast \otimes U(c)\right]
\]
(using $i$ to denote a coend coprojection). That $\Phi_C$ is indeed a coalgebra map is immediate from the definition of the coalgebra structure on the coend, together with the fact that the components $\phi_c$ are coalgebra maps. 

Now we reenact this argument but do it more generally. Suppose we are given a small $k$-linear category $\C$ together with a $k$-linear functor $F \maps \C \to \Fin\Vect$. To give a morphism $(\C, F) \to (\Comod(C), U)$ in $\LinCat \downarrow \Fin\Vect$, i.e., a functor $G \maps \C \to \Comod(C)$ such that $F = U \circ G$, or in other words a lift $G$ of $F$ through the forgetful functor from comodules to vector spaces, is precisely to endow $F$ with a $C$-comodule structure. That is to say, it is precisely to give a transformation $F \to F \otimes C$, i.e., a family of linear maps
\[
F(c) \to F(c) \otimes C
\]
natural in objects $c$ of $\C$, and obeying the axioms for an internal $C$-comodule structure in $[\C, \Fin\Vect]$. By the same reasoning as in the preceding paragraph, such comodule structures are in natural bijection with coalgebra maps 
\[
\int^{c: \C} F(c)^\ast \otimes F(c) \to C
\]
and this coend defines the construction $\End(\C, F)^\vee$. Again, this coend may be calculated at the level of vector spaces; its coalgebra structure is determined, by dinaturality, from the coalgebra structures on the $F(c)^\ast \otimes F(c)$. 

This argument shows that for each coalgebra $C$ and each object $(\C, F \maps \C \to \Fin\Vect)$ in $\LinCat \downarrow \Fin\Vect$, there is a natural bijection between 
\begin{itemize}
    \item maps $(\C, F) \to (\Comod(C), U)$ 
\end{itemize}
and 
\begin{itemize}
    \item coalgebra maps $\End(\C, F)^\vee \to C$,
\end{itemize} 
and this establishes the Tannaka adjunction. The canonical map 
\[
\Phi_C \maps \int^{c: \Comod(C)} U(c)^\ast \otimes U(c) \to C
\]
established above is the counit (at the coalgebra $C$) of this adjunction.

Tannaka reconstruction for commutative bialgebras is a refinement of the ideas we have just seen.   Let $\tRig \downarrow \Fin\Vect$ be the category whose objects are 2-rigs $\C$ equipped with a 2-rig map $U \maps \C \to \Fin\Vect$, and whose morphisms $(\C, U) \to (\D, V)$ are 2-rig maps $F \maps \C \to \D$ such that $U = V \circ F$.  Let $\Comm\Bialg$ be the category of commutative bialgebras over $k$.  By \cref{lem:Comod}, any commutative bialgebra $\C$ gives an object of $\tRig \downarrow \Fin\Vect$, namely the forgetful functor
\[   U \maps \Comod(C) \to \Fin\Vect  \]
which is a strict monoidal, exact, and faithful map between abelian 2-rigs.   This construction is functorial, so it defines a functor
\[
\Comod \maps \Comm\Bialg \to \tRig \downarrow \Fin\Vect .
\]

This functor $\Comod$ has a left adjoint $\End^\vee$, as in the coalgebra case. The algebra structure on $\End(\C, U)^\vee$ is built on the condition that $U$ is a strong symmetric monoidal functor: its requisite algebra multiplication, 
\[
\left(\int^c U(c)^\ast \otimes U(c)\right) \otimes \left(\int^d U(d)^\ast \otimes U(d)\right) \to \int^e U(e)^\ast \otimes U(e),
\]
is the universal map induced by the evident dinatural family 
\[
\begin{tikzcd}[column sep=1.2em]
     {[Uc, Uc]} \otimes [Ud, Ud] \ar[r] & {[Uc \otimes Ud, Uc \otimes Ud]} \ar[rr, "{[\theta_{cd}^{-1}, \theta_{cd}]}"] & & {[U(c \otimes d), U(c \otimes d)]} \ar[r] & \int^e [Ue, Ue]
\end{tikzcd}
\]
where $\theta_{cd} \maps Uc \otimes Ud \to U(c \otimes d)$ is the structural constraint for $U$ to be symmetric monoidal, and $[V, W]$ denotes internal hom of vector spaces, and taking advantage of $Uc^\ast \otimes Uc \cong [Uc, Uc]$. 

The Tannaka reconstruction at the coalgebra level lifts to Tannaka reconstruction at the bialgebra level. That is to say, one side of the lifted adjunction concerns bialgebra maps of the form $
\End(\C, F)^\vee \to C$, while the other side concerns 2-rig maps $\C \to \Comod(C) $ that preserve the given 2-rig maps down from these 2-rigs to $\Fin\Vect$. The assertion is that there is a natural bijection between these classes of maps. 

To see this is just a matter of checking that the diagrams of coalgebra maps expressing preservation of multiplication and unit (respectively), correspond to diagrams expressing how $G \maps \C \to \Comod(C)$, the linear lift  of the given 2-rig map $F \maps \C \to \Fin\Vect$, respects the monoidal product and monoidal unit of $C$-comodules (respectively). And this in turn is just a matter of unpacking the definitions. On one side, the condition that 
\[
\int^c F(c)^\ast \otimes F(c) \to C
\]
preserves multiplication amounts to the assertion that a family of diagrams of coalgebras of the form 
\[
\begin{tikzcd}
    {[F(c), F(c)]} \otimes {[F(d), F(d)]} \ar[r, "\phi_c \otimes \phi_d"] \ar[d, "\cong"] & C \otimes C \ar[dd, "m"] \\
    {[Fc \otimes Fd, Fc \otimes Fd]} \ar[d, "\cong"] \\
    {[F(c \otimes d), F(c \otimes d)]} \ar[r, swap, "\phi_{c \otimes d}"] & C
\end{tikzcd}
\]
are commutative (where the bottom left vertical arrow uses $\theta_{cd} \maps Fc \otimes Fd \to F(c \otimes d)$ and its inverse). If $\tilde{\phi_c} \maps F(c) \to F(c) \otimes C$ denotes the comodule structure mated to the coalgebra map $\phi_c \maps F(c)^\ast \otimes F(c) \to C$, then the diagram above is mated to a diagram of the form 
\[
\begin{tikzcd}
    Fc \otimes Fd \ar[r, "\tilde{\phi_c} \otimes \tilde{\phi_d}"] & Fc \otimes C \otimes Fd \otimes C \ar[r, "\cong"] & Fc \otimes Fd \otimes C \otimes C \ar[r, "1 \otimes 1 \otimes m"] & Fc \otimes Fd \otimes C \ar[d, "\cong"] \\
    F(c \otimes d) \ar[u, "\cong"] \ar[rrr, "\widetilde{\phi_{c \otimes d}}"] & & & F(c \otimes d) \otimes C
\end{tikzcd}
\]
where the vertical isomorphisms are $\theta_{cd}^{-1}$, $\theta_{cd}$, respectively. The bottom horizontal map expresses the structure of the comodule $G(c \otimes d)$, and the top composite expresses the structure of the comodule $G(c) \otimes G(d)$, hence $G$ preserves the monoidal product up to coherent isomorphism. The demonstration that $G$ preserves the monoidal unit is left to the reader. 

In this way, the Tannaka adjunction lifts to the level of bialgebras and 2-rigs.  We have a functor 
\[
\Comod \maps \Cocomm\Bialg \to \tRig \downarrow \Fin\Vect
\]
taking a cocommutative bialgebra $C$ to the category $\Comod(C)$ of its finite-dimensional comodules together with the forgetful functor $U \maps \Comod(\C) \to \Fin\Vect$.   This functor has a left adjoint
\[  
\End^\vee \maps \tRig \downarrow \Fin\Vect \to \Cocomm\Bialg,
\]
and the resulting adjunction has these properties:

\begin{thm}[\textbf{Tannaka Reconstruction for Commutative Bialgebras 1}]
\label{thm:Tannaka_1'} 
The counit of the adjunction $\End^\vee \dashv \Comod$, evaluated at any commutative bialgebra $C$, is an isomorphism $\End(\Comod(C), U)^\vee \to C$. 
\end{thm}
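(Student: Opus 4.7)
The plan is to reduce the claim to the coalgebra-level Tannaka reconstruction, \cref{thm:Tannaka_1}. The underlying linear map of the counit $\Phi_C \maps \End(\Comod(C), U)^\vee \to C$ at a commutative bialgebra $C$ is computed by the same coend formula $\int^{c} U(c)^\ast \otimes U(c) \to C$ as in the coalgebra-level adjunction; the passage from coalgebras to bialgebras only equips this coend with an extra algebra structure and imposes an extra algebra-preservation condition on outgoing maps, but does not change the coend itself. In particular, $\Phi_C$ corresponds under the bialgebra-level adjunction to the identity 2-rig map on $\Comod(C)$ viewed as a lift of $U$ through itself, which on forgetting back to coalgebras gives the identity comodule structure on $U$---exactly the datum classifying the coalgebra-level counit at $C$.

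Two facts now combine. First, by \cref{thm:Tannaka_1}, $\Phi_C$ is an isomorphism of coalgebras, in particular a linear isomorphism. Second, by the construction of the bialgebra-level adjunction in the preceding paragraphs, $\Phi_C$ is a map of bialgebras: specialising the general diagram chase displayed above to $(\C, F) = (\Comod(C), U)$ with $G = 1_{\Comod(C)}$, the commuting squares of coalgebras exhibited there assert precisely that $\Phi_C$ preserves multiplication, and the analogous unwritten argument handles the unit.

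Since $\Phi_C$ is a map of bialgebras whose underlying linear map is invertible, its linear inverse automatically respects all four structures (multiplication, unit, comultiplication, counit), so $\Phi_C$ is an isomorphism in $\Comm\Bialg$. The step requiring the most care is the identification of the bialgebra-level counit with its coalgebra-level shadow, since one must trace through two universal properties defining superficially different adjunctions and verify they pick out the same linear map; I expect this bookkeeping to be the principal, though essentially tautological, obstacle. Once it is in place, the theorem follows immediately from \cref{thm:Tannaka_1} and the fact that the bialgebra structure built on the coend is, by construction, compatible with the multiplication on $C$.
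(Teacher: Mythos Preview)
Your argument is correct. The paper does not supply an explicit proof of this theorem: after explaining how the coalgebra-level Tannaka adjunction lifts to the bialgebra level---the same coend, now carrying an algebra structure, with the adjunction bijection restricting from linear lifts to 2-rig lifts---it simply states the result, implicitly deferring to the cited references and to the lifting just described. Your proposal is exactly the argument that this setup invites: the bialgebra-level counit and the coalgebra-level counit at $C$ have the same underlying coalgebra map (each classifies the identity lift of $U$ through itself, and the bialgebra-level bijection is a restriction of the coalgebra-level one), so \cref{thm:Tannaka_1} furnishes invertibility, and since the bialgebra-level counit is by construction a bialgebra morphism, its linear inverse is automatically one as well. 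The bookkeeping you flag---matching the two counits---is indeed the only point requiring care, and your description of it is accurate.
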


\begin{thm}[\textbf{Tannaka Reconstruction for Commutative Bialgebras 2}] 
\label{thm:Tannaka_2'}
The unit of the adjunction $\End^\vee \dashv \Comod$, evaluated at any pair $(\C, U)$ where $\C$ is abelian and $U$ is faithful and exact, is an equivalence of 2-rigs $\C \to \Comod(\End(U)^\vee)$.
\end{thm}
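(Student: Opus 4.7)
The plan is to reduce this result to the coalgebra-level Tannaka reconstruction, Theorem \ref{thm:Tannaka_2}. Write $B = \End(U)^\vee$ for the commutative bialgebra associated with $(\C, U)$, and let $B_0$ denote its underlying coalgebra, namely the coend $\int^{c : \C} U(c)^\ast \otimes U(c)$. The bialgebra-level Tannaka adjunction was built as a lift of the coalgebra-level one, as laid out in the discussion preceding the theorem, so the unit
\[
\eta \maps \C \to \Comod(B)
\]
is a 2-rig map whose underlying $k$-linear functor coincides with the unit $\eta_0 \maps \C \to \Comod(B_0)$ of the coalgebra-level adjunction, applied to the underlying $k$-linear data of $(\C, U)$. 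Crucially, forgetting the algebra structure of $B$ does not change the underlying $k$-linear category of right comodules, so this identification is well-posed.

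Since $\C$ is abelian and $U$ is faithful and exact, Theorem \ref{thm:Tannaka_2} tells us that $\eta_0$ is an equivalence of $k$-linear categories. Hence $\eta$ is fully faithful and essentially surjective at the underlying $k$-linear level. Now, any symmetric monoidal $k$-linear functor between 2-rigs that is an equivalence at the underlying $k$-linear level is automatically an equivalence of 2-rigs: choose a $k$-linear quasi-inverse and transport the symmetric monoidal structure across the equivalence, the coherence data following formally from the monoidality of $\eta$. Since both $\C$ and $\Comod(B)$ are Cauchy complete, $\eta$ is an equivalence of 2-rigs.

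The main point requiring verification is that $\eta$ is indeed a map of 2-rigs, i.e., that its canonical comparison morphism $\eta(c) \otimes \eta(d) \to \eta(c \otimes d)$ is a $B$-comodule map. This compatibility is, however, built into the definition of the multiplication on $B$: the multiplication is defined via the universal property of the coend using the structural constraints $\theta_{c,d} \maps U(c) \otimes U(d) \to U(c \otimes d)$ precisely so that the tensor-product comodule structure on $\eta(c) \otimes \eta(d)$ matches the comodule structure on $\eta(c \otimes d)$ pulled back along $\theta_{c,d}$. This correspondence was essentially unpacked in the discussion preceding the theorem, and is the main subtlety to keep track of; once verified, the reduction to Theorem \ref{thm:Tannaka_2} closes the argument.
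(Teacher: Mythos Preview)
Your proposal is correct. The paper does not supply its own proof of this theorem: it states the coalgebra-level results with a pointer to the literature, explains how the adjunction lifts to the bialgebra/2-rig level, and then records Theorems~\ref{thm:Tannaka_1'} and~\ref{thm:Tannaka_2'} as properties of the lifted adjunction without further argument. Your reduction to Theorem~\ref{thm:Tannaka_2} via that lift is exactly the argument the paper's presentation invites, and the two points you isolate---that the underlying $k$-linear functor of the bialgebra-level unit agrees with the coalgebra-level unit, and that a strong symmetric monoidal functor which is an equivalence of underlying categories is an equivalence of 2-rigs---are the right ones to check. One small redundancy: once the bialgebra-level adjunction is in place, the unit $\eta$ is by definition a morphism in $\tRig \downarrow \Fin\Vect$, hence already a 2-rig map, so your final paragraph is confirming something that the adjunction itself guarantees rather than supplying a missing step.
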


\section{Quotient 2-rigs} 
\label{sec:quotient_2-rigs}

We plan to construct the free 2-rig on an object of subdimension $n$ as a quotient of the free 2-rig on one object, $\ksbar$.  Since $\ksbar$ is semisimple, we use some results on quotients of semisimple 2-rigs, which can be summarized in this theorem:

\begin{thm}
\label{thm:quotient_2-rig}
If $F \maps \R \to \R'$ is a map of 2-rigs and $\R$ is semisimple, then $F$ factors up to 2-isomorphism in $\TRig$ as 
\[
\R \to \P \to \R'
\]
where $\P$ is a semisimple 2-rig, $\R \to \P$ is essentially surjective and full, and $\P \to \R'$ is faithful. The functors $\R \to \P$ and $\P \to \R'$ are exact. 
\end{thm}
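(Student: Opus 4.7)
The plan is to take $\P$ to be the quotient of $\R$ by the 2-ideal $\mathcal{J}$ consisting of all morphisms $f$ in $\R$ with $F(f) = 0$. Since $F$ is a 2-rig map, $\mathcal{J}$ is closed under composition with arbitrary morphisms of $\R$ and under tensoring with arbitrary morphisms, so it is a 2-ideal in the sense developed earlier in this section. Because $\R$ is semisimple, $\mathcal{J}$ has a transparent description: writing any morphism as a matrix of components between simple summands, $f$ lies in $\mathcal{J}$ iff every component with source or target a simple $s$ such that $F(s) \not\cong 0$ is zero.

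I would then set $\P := \R/\mathcal{J}$, so that $\P$ has the same objects as $\R$ and $\P(a,b) = \R(a,b)/\{f : F(f) = 0\}$. The universal property of the quotient (from the general theory in this section) supplies a 2-rig map $G \maps \P \to \R'$ and an isomorphism $F \cong G \circ q$ in $\TRig$, where $q \maps \R \to \P$ is the quotient functor. By construction $q$ is identity on objects (hence essentially surjective) and surjective on hom-sets (hence full), while $G$ is faithful because one has quotiented out exactly the morphisms that $F$ kills.

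The substantive content is checking that $\P$ is itself a semisimple 2-rig. For semisimplicity, the surviving simples $\{s_i : F(s_i) \not\cong 0\}$ remain simple in $\P$: their endomorphism ring is $k = \R(s_i, s_i)$, and the induced quotient $k \to \P(s_i, s_i)$ is either an isomorphism or zero since $k$ is a field, the latter being excluded because $F(s_i) \not\cong 0$ forces $1_{F(s_i)} \neq 0$. Every object of $\P$ then decomposes as a direct sum of such survivors, since the decomposition in $\R$ descends through $q$ with killed simples becoming zero. Cauchy completeness of $\P$ follows by lifting idempotents and biproducts along $q$ using that $\R$ is itself semisimple abelian. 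Exactness of both $q$ and $G$ is automatic, since in semisimple abelian categories all short exact sequences split and any additive functor preserves biproducts.

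The main obstacle is less conceptual than organizational: one must confirm that $\mathcal{J}$ meets the precise definition of a 2-ideal given earlier, that the quotient inherits not just a $k$-linear but a full 2-rig structure (symmetric monoidal and Cauchy complete), and that the universal property holds in $\TRig$ and supplies the required 2-isomorphism $F \cong G \circ q$. Once the general machinery of 2-ideals and quotient 2-rigs developed in this section is in place, each of these verifications reduces to a direct calculation using the semisimple decomposition of $\R$.
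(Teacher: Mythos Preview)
Your approach is essentially the paper's: the quotient $\P(a,b) = \R(a,b)/\{f: F(f)=0\}$ is exactly the (es+full, faithful) factorization the paper constructs directly, and the verifications (Cauchy completeness, semisimplicity, 2-rig structure, exactness) run in parallel.

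Two correctable points are worth flagging. First, the paper's notion of \emph{2-ideal} is a full subcategory of \emph{objects} (replete, closed under biproducts, retracts, and tensoring with arbitrary objects), not a class of morphisms; so your $\mathcal{J}$ is not a 2-ideal in the paper's sense and you cannot simply invoke ``the general theory in this section'' for it. The paper instead builds the factorization by hand from the equivalence relation $f\sim g \iff F(f)=F(g)$, and only afterward introduces object-level 2-ideals and relates them to the factorization via a first isomorphism theorem. Second, $\R(s_i,s_i)$ is a division algebra over $k$, not $k$ itself in general; your injectivity argument survives (a unital ring map out of a division ring into a nonzero ring is injective), but the paper's cleaner route is simply to note that each $\P(A,A)$ is a quotient of the semisimple algebra $\R(A,A)$ and hence semisimple, avoiding any bookkeeping of individual simples.
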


To understand this statement, we need to understand the 2-category of 2-rigs and also the concept of a semisimple 2-rig.

\begin{defn}
\label{defn:2-rig}
    Let $\TRig$ denote the 2-category whose
    \begin{itemize}
        \item objects are \define{2-rigs}: symmetric monoidal Cauchy complete $k$-linear categories,  
        \item morphisms are \define{maps of 2-rigs}: symmetric monoidal $k$-linear functors, 
        \item 2-morphisms are symmetric monoidal $k$-linear natural transformations.
    \end{itemize}
\end{defn}

To define semisimplicity for 2-rigs, first recall that an object of an abelian category is defined to be simple if it has no nontrivial quotients, and semisimple if it is a coproduct of simple objects.  A $k$-algebra $R$ is said to be semisimple if it is semisimple as a left $R$-module (or, it turns out, equivalently, a right $R$-module).  Then:

\begin{defn}
A $k$-linear category $\R$ is \define{semisimple} if it is Cauchy complete and every endomorphism algebra $\R(A, A)$ is semisimple.
\end{defn}

In what follows we use some standard facts \cite{AndersonFuller,EGNO}: 

\begin{enumerate}
    \item A quotient of a semisimple algebra is semisimple.
    \item By Wedderburn--Artin theory, every semisimple algebra is a finite product of algebras, each of which is isomorphic to a matrix ring $M_n(D)$ where $D$ is a division algebra over $k$. 
    \item A semisimple $k$-linear category is the same as an abelian category in which all exact sequences split and every object is a finite coproduct of simple objects. 
\end{enumerate}

The following result is immediate from the splitting of exact sequences in semisimple categories: 

\begin{lem}
    If $F \maps \R \to \R'$ is a $k$-linear functor and $\R$ is semisimple, then $F$ is exact, i.e., it preserves exact sequences. 
\end{lem}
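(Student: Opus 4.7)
The plan is to invoke standing fact (4), which says that a semisimple $k$-linear category is abelian and every short exact sequence in it splits. A short exact sequence
\[
0 \to A \xrightarrow{i} B \xrightarrow{p} C \to 0
\]
in $\R$ therefore underlies biproduct data: morphisms $r \maps B \to A$ and $s \maps C \to B$ satisfying $ri = 1_A$, $ps = 1_C$, $pi = 0$, $rs = 0$, and $ir + sp = 1_B$ in the relevant endomorphism groups.

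The crucial observation is that this collection of identities lives entirely within the $k$-linear structure of $\R$: it mentions only composition of morphisms, the zero morphism, and one additive identity among endomorphisms of $B$. Hence the $k$-linear functor $F$ carries this data to analogous data in $\R'$, exhibiting $F(B)$ as a biproduct $F(A) \oplus F(C)$ with $F(i)$ and $F(p)$ as the inclusion and projection. This forces $F(i)$ to be a kernel of $F(p)$ and $F(p)$ a cokernel of $F(i)$, so
\[
0 \to F(A) \xrightarrow{F(i)} F(B) \xrightarrow{F(p)} F(C) \to 0
\]
is split short exact in $\R'$.

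There is no real obstacle here; the argument is purely formal. The only point that requires a word of care is the interpretation of "exact" when the target $\R'$ need not itself be abelian. In the 2-rig setting---Cauchy complete $k$-linear categories---the natural reading is preservation of split short exact sequences, and this is precisely what the above produces. Since $\R$ is semisimple, every short exact sequence in $\R$ is split, so nothing further need be checked.
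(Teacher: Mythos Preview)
Your proof is correct and follows exactly the approach the paper intends: the paper's own ``proof'' is merely the one-line remark that the result is immediate from the splitting of exact sequences in semisimple categories, and you have spelled out precisely that argument. (One trivial slip: the standing fact you invoke is numbered (3), not (4).)
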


For any functor between categories $F \maps \R \to \R'$, not necessarily $k$-linear, there is factorization of $F$ as 
\[
\R \to \P \to \R'
\]
where the first functor $\R \to \P$ is essentially surjective and full (denote this by $\R \twoheadrightarrow \P$), and the second functor $\P \to \R'$ is faithful (denote this by $\P \rightarrowtail \R'$). This factorization is uniquely determined up to categorical equivalence. An explicit description is as follows: 

\begin{itemize}
    \item Objects of $\P$ are the objects of $\R$, and the functor $\R \to \P$ is the identity on objects; 
    \item Morphisms of $\P$ are equivalence classes of morphisms of $\R$, where two morphisms $f, g$ of the form $A \to B$ are equivalent, $f \sim g$, if $F(f) = F(g)$. The functor $\R \to \P$ takes a morphism $f$ of $\R$ to its equivalence class $[f]$. 
\end{itemize} 

Thus there are local epi-mono factorizations of the canonical maps $\R(A, B) \to \R'(FA, FB)$ between homsets, 
\[
\R(A, B) \twoheadrightarrow \P(A, B) \rightarrowtail \R'(FA, FB).
\]

Now assume $F$ is a map of 2-rigs, and that $\R$ is semisimple. Here are some basic facts about the (es+full, faithful) factorization $\R \twoheadrightarrow \P \rightarrowtail \R'$ of $F$. 

\begin{lem}
    $\P$ is a $k$-linear category, and the functors $\R \to \P$ and $\P \to \R'$ are $k$-linear. 
\end{lem}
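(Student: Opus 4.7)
The plan is to observe that the (essentially surjective and full, faithful) factorization described above is in fact built from a $k$-linear congruence on $\R$, so that the $k$-linear structure descends from $\R$ to $\P$ and passes out to $\R'$. First I would note that since $F$ is a $k$-linear functor, each hom-component $F_{A,B} \maps \R(A,B) \to \R'(FA,FB)$ is a $k$-linear map, and the equivalence relation $f \sim g \iff F(f) = F(g)$ then coincides with $f - g \in \ker F_{A,B}$. Hence $\P(A,B)$ is naturally identified with the quotient $k$-vector space $\R(A,B)/\ker F_{A,B}$, and the projection $\R(A,B) \twoheadrightarrow \P(A,B)$ is $k$-linear by construction.

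Next I would check that composition in $\P$ is well-defined and $k$-bilinear. The key point is that the family of subspaces $\ker F_{A,B}$ is closed under pre- and post-composition: if $f \in \ker F_{A,B}$ and $h \maps B \to C$ in $\R$, then $F(h \circ f) = F(h) F(f) = 0$, so $h \circ f \in \ker F_{A,C}$, and symmetrically on the other side. Consequently the formula $[g] \circ [f] := [g \circ f]$ defines a composition on $\P$, and $k$-bilinearity is inherited from the $\R$ side through the $k$-linear quotient maps.

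The functoriality claim is then immediate: $\R \to \P$ is the identity on objects and is the projection $\R(A,B) \twoheadrightarrow \P(A,B)$ on hom-spaces, while $\P \to \R'$ is the identity on objects and is the injection $\P(A,B) = \R(A,B)/\ker F_{A,B} \rightarrowtail \R'(FA,FB)$ induced by $F_{A,B}$ via the universal property of the quotient. Both are $k$-linear by construction. I do not anticipate a genuine obstacle here: nowhere in the argument does one invoke semisimplicity of $\R$, the symmetric monoidal structure, or Cauchy completeness. The statement amounts to the general observation that the (essentially surjective and full, faithful) factorization of a $k$-linear functor is automatically carried out inside the 2-category of $k$-linear categories.
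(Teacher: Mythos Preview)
Your proposal is correct and follows essentially the same approach as the paper, which simply observes that the local epi-mono factorizations of the $k$-linear maps $\R(A,B) \to \R'(FA,FB)$ lift from sets to vector spaces; you have merely unpacked this into the explicit quotient-by-kernel description. One small slip: the functor $\P \to \R'$ is not the identity on objects---it sends $A$ to $F(A)$---though your hom-level description $\P(A,B) \rightarrowtail \R'(FA,FB)$ is correct, so the argument is unaffected.
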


\begin{proof}
    $\P$ acquires the structure of $k$-linear category, and the functors $\R \to \P$ and $\P \to \R'$ are $k$-linear functors, since the epi-mono factorizations of the $k$-linear maps $\R(A, B) \to \R'(FA, FB)$ lift from sets to vector spaces. 
\end{proof}

\begin{lem}
    The functor $\R \to \P$ preserves monos and epis. 
\end{lem}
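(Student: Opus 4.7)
The plan is to exploit the semisimplicity of $\R$: in a semisimple abelian category every mono and every epi splits, and split monos/epis are preserved by \emph{any} functor whatsoever, not just by the quotient functor $\R \to \P$.

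First I would invoke fact (3) above to assert that $\R$ is abelian and every short exact sequence in $\R$ splits. For a mono $f \maps A \to B$ in $\R$, the exact sequence $0 \to A \to B \to \mathrm{coker}(f) \to 0$ then splits, producing a retraction $g \maps B \to A$ with $g f = 1_A$. Applying the quotient functor $\R \to \P$—which is the identity on objects and sends each morphism $h$ to its equivalence class $[h]$—gives $[g][f] = [g f] = [1_A] = 1_A$ in $\P$, so $[f]$ is a split mono in $\P$, hence a mono.

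Dually, if $f \maps A \to B$ is an epi in $\R$, the exact sequence $0 \to \ker(f) \to A \to B \to 0$ splits, furnishing a section $s \maps B \to A$ with $f s = 1_B$. Then $[f][s] = [f s] = 1_B$ in $\P$, so $[f]$ is a split epi, hence an epi.

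I do not anticipate any serious obstacle. The entire argument reduces to the purely algebraic observation that split monos and split epis are sent to split monos and split epis by every functor, and the semisimplicity hypothesis hands us exactly the splitting of every mono and every epi in $\R$. No properties of $\P$ beyond identity-on-objects and preservation of composition are used.
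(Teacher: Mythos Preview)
Your proof is correct, but it proceeds differently from the paper's. The paper argues by composing with the faithful leg $\P \to \R'$: since $F$ is exact (as any $k$-linear functor out of a semisimple category is), $F(f)$ is monic/epic in $\R'$ whenever $f$ is monic/epic in $\R$; and since faithful functors reflect monos and epis, $[f]$ must already be monic/epic in $\P$. Your approach instead stays entirely on the domain side: semisimplicity of $\R$ makes every mono and epi split there, and split monos/epis are preserved by any functor. Your argument is more elementary and in fact proves slightly more---it shows that \emph{every} functor out of $\R$ preserves monos and epis, not just the quotient $\R \to \P$---while the paper's argument isolates what is really being used from the factorization (exactness of $F$ together with faithfulness of $\P \to \R'$), and would go through unchanged for any exact $F$ even without splitting in $\R$.
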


\begin{proof}
    The exact functor $F \maps \R \to \R'$ preserves monos and epis: if a morphism $f$ in $\R$ is monic/epic, then $F(f)$ is monic/epic in $\R'$. The image $[f]$ in $\P$ is monic/epic in $\P$, because $F(f)$ is monic/epic in $\R'$, and any faithful functor, for instance $\P \to \R'$, reflects monos and epis. 
\end{proof} 

\begin{lem}
    $\P$ is Cauchy complete as a $k$-linear category. 
\end{lem}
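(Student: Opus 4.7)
The plan is to verify the two defining properties of Cauchy completeness for the $k$-linear category $\P$: existence of finite biproducts and splitting of all idempotents.

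For finite biproducts, I would exploit that $\R \to \P$ is essentially surjective on objects and $k$-linear. A biproduct in a $k$-linear category is characterized by the equations $\pi_i \iota_j = \delta_{ij}\, 1$ and $\sum_i \iota_i \pi_i = 1$ on projections and injections, which are preserved by any $k$-linear functor. So given objects of $\P$ (which are objects of $\R$), their biproduct in $\R$, together with its structural maps, maps to a biproduct in $\P$; the zero object is handled analogously.

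The content is splitting idempotents, and my strategy is to lift them to $\R$ and split them there. Given an idempotent $e \in \P(A,A)$, the functor $\R \to \P$ is full and $k$-linear, so the map $\R(A,A) \to \P(A,A)$ is a surjective homomorphism of $k$-algebras; its kernel is a two-sided ideal, being closed under pre- and post-composition. Hence $\P(A,A)$ is a quotient algebra of the semisimple algebra $\R(A,A)$. Assuming $e$ can be lifted to an idempotent $\tilde e \in \R(A,A)$, Cauchy completeness of $\R$ yields a splitting $B \in \R$, $i \maps B \to A$, $p \maps A \to B$ with $pi = 1_B$ and $ip = \tilde e$; applying $\R \to \P$ to this splitting then splits $e$ in $\P$.

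The main obstacle is the idempotent-lifting step, and this is where Wedderburn--Artin (fact (2) from the standard facts cited above) enters decisively. Writing the Artinian semisimple algebra $\R(A,A)$ as a finite product $\prod_i M_{n_i}(D_i)$ of matrix rings over division algebras, every two-sided ideal is a subproduct $\prod_{i \in S} M_{n_i}(D_i)$; consequently the surjection $\R(A,A) \to \P(A,A)$ is canonically a product projection and admits an algebra section picking out the complementary factors. Using this section to lift $e$ yields the required idempotent $\tilde e \in \R(A,A)$, and transporting its splitting through $\R \to \P$ completes the argument. The argument is quite robust: only the existence of an algebra-level section of the quotient $\R(A,A) \twoheadrightarrow \P(A,A)$ is really needed, and semisimplicity provides this for free.
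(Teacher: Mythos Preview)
Your proof is correct, but the idempotent-splitting step takes a genuinely different route from the paper's. The paper does \emph{not} lift the idempotent: it picks an arbitrary preimage $e \in \R(A,A)$ of $[e]$ (not assumed idempotent), forms its epi--mono factorization $A \overset{r}{\twoheadrightarrow} \im(e) \overset{i}{\rightarrowtail} A$ in the abelian category $\R$, and then uses the previously established lemma that $\R \to \P$ preserves epis and monos to deduce that $[i],[r]$ split $[e]$ in $\P$ (from $[i][r][i][r]=[e]^2=[e]=[i][1][r]$ one cancels). Your approach instead exploits the Wedderburn--Artin decomposition of $\R(A,A)$ to produce an algebra section of $\R(A,A)\twoheadrightarrow \P(A,A)$, hence an honest idempotent lift, and then transports the splitting from $\R$. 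The paper's argument is lighter on ring-theoretic input (no need to analyze two-sided ideals of matrix products) but leans on the earlier exactness/mono-epi-preservation lemma; yours is self-contained at the level of this lemma and makes the role of semisimplicity of endomorphism algebras very explicit, at the cost of invoking the full structure theorem.
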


\begin{proof}
    If $A \oplus B$ is a biproduct of $A$ and $B$ in $\R$, with injections and projections 
    \[
    i_A \maps A \to A \oplus B, \qquad i_B \maps B \to A \oplus B, \qquad p_A \maps A \oplus B \to A, \qquad p_B \maps A \oplus B \to B,
    \]
    satisfying the equations $p_A i_A = 1_A$, $p_B i_B = 1_B$, $p_A i_B = 0$, $p_B i_A = 0$, $i_A p_A + i_B p_B = 1_{A \oplus B}$, then the same equations transfer along the $k$-linear functor $\R \to \P$, making $A \oplus B$ the biproduct of $A$ and $B$ in $\P$. 

    Let $[e] \maps A \to A$ be an idempotent in $\P$. This is the image of some map $e \maps A \to A$ in $\R$, not necessarily idempotent. But $e$ does have a mono-epi factorization in $\R$, 
    \[
    A \overset{r}{\twoheadrightarrow} \im(e) \overset{i}{\rightarrowtail} A.
    \]
    Note that $[r]$ is epi in $\P$ and $[i]$ is monic in $\P$, since the functor $\R \to \P$ preserves monos and epis. The pair $[r], [i]$ splits $[e]$, since $[e] = [i] [r]$ by functoriality, and also 
    \[
    [i][r][i][r] = [e][e] = [e] = [i][1_{\im(e)}][r]
    \]
    whence $[r][i] = [1_{\im(e)}]$ follows from $[r]$ being epic and $[i]$ being monic. 
\end{proof}

\begin{lem}
    The $k$-linear category $\P$ is semisimple. 
\end{lem}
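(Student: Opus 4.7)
The plan is a direct reduction to the $k$-algebra fact quoted as item (1) in the excerpt: a quotient of a semisimple $k$-algebra is semisimple. By definition, a $k$-linear category is semisimple when it is Cauchy complete and each endomorphism algebra is semisimple. Cauchy completeness of $\P$ is the content of the preceding lemma, so the only remaining task is to show that $\P(A,A)$ is a semisimple $k$-algebra for each object $A$ of $\P$ (equivalently, each object of $\R$, since the functor $\R\to\P$ is the identity on objects).

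First I would observe that the functor $\R \to \P$ is $k$-linear, preserves composition and identities, and (by construction, i.e.\ by being essentially surjective and full) is surjective on hom-spaces. Restricted to endomorphisms of a single object $A$, this yields a surjective homomorphism of $k$-algebras
\[
\R(A,A) \twoheadrightarrow \P(A,A).
\]
Since $\R$ is semisimple by hypothesis, the $k$-algebra $\R(A,A)$ is semisimple. By item (1) of the standard facts cited in the excerpt, its quotient $\P(A,A)$ is semisimple as well. Combined with the Cauchy completeness already established, this gives the conclusion.

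There is no serious obstacle here. The only thing worth verifying carefully is that the map $\R(A,A) \to \P(A,A)$ is a surjection of $k$-algebras, not merely of $k$-vector spaces. But this is immediate from the explicit description of $\P$: morphisms are equivalence classes $[f]$ under $f \sim g \iff F(f) = F(g)$, composition is defined by $[f]\circ[g] = [f\circ g]$, and the identity is $[1_A]$, so the quotient map is manifestly a $k$-algebra homomorphism. Everything else is a direct appeal to the Wedderburn--Artin-flavored algebraic input quoted earlier.
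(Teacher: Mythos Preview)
Your proof is correct and follows exactly the same route as the paper: Cauchy completeness is inherited from the preceding lemma, and each $\P(A,A)$ is a quotient of the semisimple algebra $\R(A,A)$, hence semisimple by the cited fact. Your extra care in checking that $\R(A,A)\to\P(A,A)$ is a surjective $k$-algebra homomorphism is a welcome elaboration but not a departure from the paper's argument.
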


\begin{proof}
    $\P$ is Cauchy complete, so by our definition of semisimple $k$-linear categories it only remains to show that each endomorphism algebra $\P(A, A)$ is semisimple. But $\P(A, A)$ is a quotient of the semisimple algebra $\R(A, A)$, and as earlier observed, quotients of semisimple algebras are semisimple. 
\end{proof}

\begin{lem}
    $\P$ is a 2-rig, and the functors $\R \to \P$ and $\P \to \R'$ are 2-rig maps. 
\end{lem}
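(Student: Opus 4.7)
The plan is to transport the symmetric monoidal structure of $\R$ across the essentially surjective full functor $q \maps \R \to \P$, and then to equip both factoring functors with the resulting 2-rig structure. Since the preceding lemmas already make $\P$ a $k$-linear, Cauchy complete, semisimple category, the only remaining content is to produce a compatible symmetric monoidal product on $\P$ and verify that $q$ and the faithful functor $\bar F \maps \P \to \R'$ are 2-rig maps.

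On objects I set $A \otimes_\P B := A \otimes_\R B$, since $q$ is the identity on objects, and take the associator, unitors, and braiding of $\P$ to be the images under $q$ of those in $\R$. Using fullness of $q$, every morphism of $\P$ has the form $[f]$ for some $f$ in $\R$, so I define $[f] \otimes [g] := [f \otimes g]$. Once this is well-defined, bilinearity on hom-spaces, naturality of the structural isomorphisms, and the pentagon, triangle, and hexagon axioms in $\P$ all follow by applying the $k$-linear functor $q$ to the corresponding statements in $\R$.

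The main obstacle, and the only substantive calculation, is well-definedness of $[f] \otimes [g] := [f \otimes g]$. Given $[f] = [f']$ and $[g] = [g']$, equivalently $F(f) = F(f')$ and $F(g) = F(g')$, I would invoke the symmetric monoidal constraint $\theta_{A,B} \maps F(A) \otimes F(B) \to F(A \otimes B)$ of $F$ together with its naturality to get
\[ F(f \otimes g) \circ \theta_{A,B} = \theta_{A',B'} \circ (F(f) \otimes F(g)) = \theta_{A',B'} \circ (F(f') \otimes F(g')) = F(f' \otimes g') \circ \theta_{A,B}; \]
since $\theta_{A,B}$ is invertible, $F(f \otimes g) = F(f' \otimes g')$, which forces $[f \otimes g] = [f' \otimes g']$ in $\P$.

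It remains to exhibit the factoring functors as 2-rig maps. By construction $q$ is strict symmetric monoidal and $k$-linear, hence a 2-rig map. The faithful $k$-linear functor $\bar F \maps \P \to \R'$ acts as $F$ on objects and sends $[f]$ to $F(f)$, and I equip it with the same structural constraints $\theta_{A,B}$ as $F$; the hexagon, associativity, and unit axioms for $\bar F$ then hold in $\R'$ because they hold for $F$. Since $\bar F \circ q = F$ with matching structural constraints, this realises the factorisation $\R \to \P \to \R'$ in $\TRig$, completing the proof.
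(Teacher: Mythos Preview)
Your proof is correct and follows essentially the same approach as the paper: the tensor on $\P$ is defined on objects as in $\R$ and on morphisms by $[f]\otimes[g]:=[f\otimes g]$, with well-definedness established via the invertible monoidal constraint of $F$ (your naturality computation is exactly the paper's observation that $F(f\otimes g)=\phi_{CD}\circ(Ff\otimes Fg)\circ\phi_{AB}^{-1}$, rewritten). The only cosmetic difference is that you verify the coherence axioms in $\P$ by pushing them down from $\R$ via $q$, whereas the paper reflects them back from $\R'$ via faithfulness of $\bar F$; both arguments are valid and equally short.
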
 

\begin{proof}
    The monoidal product $\otimes \maps \P \times \P \to P$ is defined objectwise as in $\R$. For morphisms $[f] \maps A \to C$ and $[g] \maps B \to D$ in $\P$, define $[f] \otimes [g] \maps A \otimes B \to C \otimes D$ to be $[f \otimes g]$. To see this is well-defined (is independent of the $f, g$ in $\R$ chosen to represent $[f], [g]$), let 
    \[
    \phi_{AB} \maps FA \otimes FB \to F(A \otimes B)
    \]
    be the (invertible) structural constraint on the symmetric monoidal functor $F$. Then $F(f \otimes g)$ is uniquely determined from $Ff$ and $Fg$ via the composite 
    \[
    \begin{tikzcd}
        F(A \otimes B) \ar[r, "\phi_{AB}^{-1}"] & FA \otimes FB \ar[r, "Ff \otimes Fg"] & FC \otimes FD \ar[r, "\phi_{CD}"] & F(C \otimes D).
    \end{tikzcd}
    \]
    Well-definedness of $[f \otimes g]$ then follows from faithfulness of the functor $J \maps \P \to \R'$ in the factorization, where $Ff = J([f])$ and $Fg = J([g])$ and $F(f \otimes g) = J([f \otimes g]$ for uniquely determined maps $[f], [g], [f \otimes g]$ in $\P$, hence $[f \otimes g]$ is uniquely determined from $[f]$ and $[g]$. All structural constraints (associativity, symmetry, etc.) for the tensor product on $\P$ descend from those in $\R$, and the rest of the proof is routine, again taking advantage of faithfulness of $J$, where all necessary equations that must hold in $\P$ (functoriality and $k$-linearity of $\otimes$, etc.) are reflected from the corresponding equations holding in $\R'$.     
\end{proof}

The last two propositions prove the theorem at the head of this section, \cref{thm:quotient_2-rig}, but a few more concepts will be useful for us. 


For $F \maps \R \to \R'$ a 2-rig map, define the \define{kernel} of $F$ to be the full subcategory of $\R$ consisting of objects $r \in \R$ such that $F(r) \cong 0$. It is easy to see that if $F$ is faithful, then $\ker(F)$ consists only of zero objects. Clearly $\ker(F)$ also has the following properties: 

\begin{itemize}
    \item It is closed under biproducts and retracts in $\R$. 
    \item It is \define{replete}: if $r \in \ker(F)$ and $r \cong s$, then $s \in \ker(F)$. 
    \item If $r \in \ker(F)$ and $s \in \R$, then $r \otimes s \in \ker(F)$. 
\end{itemize}

We define a \define{2-ideal} of $\R$ to be a full subcategory of $\R$ having these properties. In the special case where $\R$ is semisimple, we may adduce a few more properties. 

\begin{lem}
    If $\R$ is semisimple, then any 2-ideal $\I$ in $\R$ is a Serre subcategory (it is closed under subobjects, quotients and extensions). Moreover, $\I$ is the (replete) finite coproduct closure of the simple objects it contains. 
\end{lem}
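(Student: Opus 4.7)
My plan is to leverage fact (4) listed at the head of this section: a semisimple $k$-linear category is automatically abelian and every short exact sequence splits. Consequently every monomorphism and every epimorphism in $\R$ is split, and each Serre closure property reduces to a direct application of one of the three stipulated closure axioms for a 2-ideal (biproducts, retracts, repleteness).

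For the Serre subcategory properties I would proceed as follows. Given a mono $A \hookrightarrow B$ with $B \in \I$, its splitting exhibits $A$ as a retract of $B$, so closure of $\I$ under retracts places $A \in \I$. Dually, any epi $B \twoheadrightarrow C$ with $B \in \I$ splits, making $C$ a retract of $B$, hence $C \in \I$. For extensions, a short exact sequence $0 \to A \to B \to C \to 0$ with $A, C \in \I$ splits to give $B \cong A \oplus C$; biproduct closure together with repleteness then yields $B \in \I$.

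For the second claim, I would write an arbitrary $r \in \I$ as a finite coproduct $r \cong s_1 \oplus \cdots \oplus s_m$ of simple objects of $\R$, which exists by semisimplicity. Each $s_i$ is a retract of $r$ via its canonical injection-projection pair inside the biproduct, so $s_i \in \I$ by retract closure. This shows $\I$ is contained in the replete finite-coproduct closure of the simple objects it contains; the reverse inclusion is immediate from biproduct closure and repleteness of $\I$.

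I do not anticipate any genuine obstacle: once the splitting of exact sequences in a semisimple category is invoked, each Serre condition collapses to a one-line application of the 2-ideal axioms, and the finite-coproduct-of-simples description likewise follows by exhibiting each simple summand as a retract. The only point worth flagging is that the biproduct injection-projection data used to realize each $s_i$ as a retract of $r$ is available because Cauchy completeness is built into the definition of semisimplicity we are using.
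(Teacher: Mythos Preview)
Your argument is correct, and in fact the paper states this lemma without proof, so there is nothing to compare against: your writeup would serve perfectly well as the omitted justification. The reductions you make are exactly the intended ones---splitting of monos, epis, and short exact sequences in a semisimple category collapses each Serre closure condition to the retract or biproduct axiom for a 2-ideal, and the decomposition of an arbitrary object into simple summands handles the second clause.

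One small correction: the fact you invoke is item (3), not item (4), in the enumerated list preceding the lemma (the list has three entries in the compiled paper).
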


\begin{defn}
\label{defn:quotient_2-rig}
Let $\I$ be a 2-ideal of a semisimple 2-rig $\R$. Define the \define{quotient} $\R/\I$ to be the following category:

\begin{itemize}
    \item The objects of $\R/\I$ are the objects of $\R$, 
    \item The morphisms of $\R/\I$ are equivalence classes of morphisms of $\R$, where $f, g \maps a \to b$ are equivalent, $f \sim g$, if $f - g$ factors through an object $I \in \I$. (Equivalently by the preceding lemma, if $\im(f-g) \in \I$, since the image will be a subquotient of any $I$ that $f-g$ factors through.) 
\end{itemize}
\end{defn}

A few things have to be checked, of course. Obviously $\sim$ is reflexive and symmetric. It is transitive because if $f-g$ factors through $I \in \I$ and $g - h$ factors through $I' \in \I$, then their sum $f - h$ factors through $I \oplus I' \in \I$. If $f \sim g \maps A \to B$ and $h \maps B \to B'$, then $h f \sim h g$ since $hf - hg = h(f-g)$ factors through any object that $f-g$ factors through; dually, $f h \sim g h$ for any $h \maps A' \to A$. It follows that $\sim$ is a categorical congruence: the category structure on $\R$ descends to a category structure on $\R/\I$. 

Through similarly routine arguments, largely parallel to arguments given earlier in this section and left for the reader to verify, the following result holds: 

\begin{lem}
    If $\I$ a 2-ideal of a semisimple 2-rig $\R$, $\R/\I$ inherits from $\R$ a 2-rig structure, so that the quotient functor $\R \to \R/\I$ is a 2-rig map. $\R/\I$ is a semisimple 2-rig. 
\end{lem}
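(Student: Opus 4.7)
The plan is to import, essentially verbatim, the constructions used earlier to build the 2-rig structure on the intermediate category $\P$, while additionally exploiting the three closure properties in the definition of a 2-ideal to ensure that the congruence $\sim$ is respected by all the structure. Concretely, I would check four things in order: $k$-linearity of $\R/\I$ and of the quotient functor $Q \maps \R \to \R/\I$; descent of the symmetric monoidal structure; Cauchy completeness; and finally semisimplicity.

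For the first two items the only real content is well-definedness of sum, scalar multiple, and tensor on equivalence classes of morphisms. If $f - f'$ factors through $I \in \I$ and $g - g'$ factors through $J \in \I$, then $(f+g) - (f'+g')$ factors through $I \oplus J$, which lies in $\I$ by closure under biproducts, and $\lambda f - \lambda f'$ factors through $I$; so each $(\R/\I)(A,B)$ is a quotient vector space of $\R(A,B)$ and $Q$ is $k$-linear. For the tensor, I would set $[f] \otimes [g] := [f \otimes g]$; well-definedness follows from the absorption property, since $(f - f') \otimes g$ factors through $I \otimes B'$ and $f' \otimes (g - g')$ through $A' \otimes J$, both of which lie in $\I$. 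All the associator, unitor, and symmetry data descend as equivalence classes of their representatives in $\R$, and the pentagon, triangle, and hexagon axioms in $\R/\I$ are reflected from the ones holding in $\R$. Thus $Q$ becomes a strict symmetric monoidal $k$-linear functor.

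Cauchy completeness is where I expect the main obstacle to lie. Biproducts descend routinely because the defining equations $p_A i_A = 1_A$, $i_A p_A + i_B p_B = 1_{A \oplus B}$, and so on transfer along the $k$-linear $Q$. Splitting of idempotents is more delicate, and I would mirror the earlier argument for $\P$: given an idempotent $[e] \maps A \to A$ in $\R/\I$, lift it to a representative $e \maps A \to A$ in $\R$ (not necessarily idempotent), take the image factorization $A \overset{r}{\twoheadrightarrow} \im(e) \overset{i}{\rightarrowtail} A$ in $\R$, and observe
\[
[i][r][i][r] = [e]^2 = [e] = [i] \cdot 1_{\im(e)} \cdot [r].
\]
The cancellation step requires that $[r]$ is epic and $[i]$ is monic in $\R/\I$. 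Here I would invoke the crucial feature of a semisimple category: every epi splits and every mono splits, and any functor preserves split epis and split monos. Hence $Q$ carries $r,i$ to a split epi and split mono in $\R/\I$, and cancellation delivers $[r][i] = 1_{\im(e)}$. Verifying this preservation in the quotient setting is the technical heart of the argument.

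For semisimplicity, each endomorphism algebra $(\R/\I)(A,A)$ is a quotient $k$-algebra of the semisimple algebra $\R(A,A)$, and quotients of semisimple algebras are semisimple by Wedderburn--Artin, as noted at the start of the section. Combined with the Cauchy completeness established above, this matches the definition of a semisimple 2-rig, completing the plan.
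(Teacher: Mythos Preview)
Your proposal is correct and carries out exactly what the paper leaves to the reader: the paper's own proof is the single sentence ``through similarly routine arguments, largely parallel to arguments given earlier in this section and left for the reader to verify.''

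One point is worth highlighting. The earlier arguments for $\P$ leaned on the faithful functor $J \maps \P \to \R'$ both to verify well-definedness of $[f]\otimes[g]$ and to show $[i],[r]$ are monic/epic (via ``$F$ preserves, $J$ reflects''). For an arbitrary 2-ideal $\I$ no such target functor is given, so a literal transcription of the $\P$ arguments would not go through. Your replacements are the right adaptations: using the absorption property $r\in\I,\ s\in\R \Rightarrow r\otimes s\in\I$ directly for well-definedness of $\otimes$, and using that in a semisimple category every mono and epi \emph{splits} (hence is preserved by any functor, in particular $Q$) for the idempotent-splitting step. The latter observation is in fact cleaner than the original argument and would have worked in the $\P$ case as well.
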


Another way to view $\R/\I$ is that it is the localization of $\R$ obtained by formally inverting zero maps $I \to 0$ for all $I \in \I$. This point of view is implicit in the proof of the next result, which could be called a ``first isomorphism theorem'' for 2-rig maps $F \maps \R \to \R'$ (again assuming $\R$ is semisimple). 

\begin{lem}
    Given a semisimple 2-rig $\R$ and a 2-rig map $F \maps \R \to \R'$, with (es+full, faithful) factorization $\R \to \P \to \R'$, the 2-rig $\P$ is identified with $\R/\ker(F)$. 
\end{lem}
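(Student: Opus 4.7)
The plan is to show that the morphism equivalence relations defining $\P$ and $\R/\ker(F)$ coincide. Both categories share the object set of $\R$, and both arise as identity-on-objects quotients of $\R$ by a congruence on its hom-sets, so once the two congruences are shown to agree, the identification follows mechanically.

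First I would reformulate the question. In $\P$, parallel morphisms $f, g \maps A \to B$ are identified precisely when $F(f) = F(g)$, which by $k$-linearity is the same as $F(f - g) = 0$. In $\R/\ker(F)$, they are identified when $f - g$ factors through some object of $\ker(F)$. So the problem reduces to showing, for every morphism $h \maps A \to B$ of $\R$, that $F(h) = 0$ if and only if $h$ factors through an object of $\ker(F)$.

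The easy direction is ``if'': any factorization of $h$ through $I \in \ker(F)$ gives $F(h)$ as a composite through $F(I) \cong 0$, hence $F(h) = 0$. The ``only if'' direction I expect to be the only real step. Here I would use that $\R$, being semisimple, is abelian, so $h$ admits an image factorization $h = i \circ r$ with $r \maps A \twoheadrightarrow \im(h)$ epi and $i \maps \im(h) \rightarrowtail B$ mono. Since every $k$-linear functor out of a semisimple 2-rig is exact (as noted earlier in the section), $F(r)$ is epi and $F(i)$ is mono in $\R'$. From $F(i) F(r) = F(h) = 0$ together with $F(i)$ mono, one gets $F(r) = 0$; combined with $F(r)$ being epi, this forces $F(\im(h)) \cong 0$. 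Hence $\im(h)$ lies in $\ker(F)$, and $h$ factors through it, as desired.

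With this bijection of hom-sets in hand, the identification as 2-rigs is forced: both $\P$ and $\R/\ker(F)$ inherit their $k$-linear, Cauchy complete, and symmetric monoidal structure from $\R$ by the same descent procedure (quotienting by a categorical congruence compatible with sums, composition, and tensor), so the identity-on-objects bijection promotes to a 2-rig isomorphism. The main obstacle is the ``only if'' equivalence, but it is disposed of cleanly once the image factorization in $\R$ and the exactness of $F$ are invoked; no further surprises are anticipated.
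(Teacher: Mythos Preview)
Your proposal is correct and follows essentially the same route as the paper: reduce to showing that $F(h)=0$ iff $h$ factors through an object of $\ker(F)$, and for the nontrivial direction use the epi--mono (image) factorization of $h$ together with exactness of $F$ to conclude $F(\im(h))\cong 0$. The only cosmetic difference is that you make the easy ``if'' direction explicit, whereas the paper leaves it implicit.
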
 

\begin{proof}
Both $\P$ and $\R/\ker(F)$ are declared to have the same objects as $\R$, and in both cases the morphisms are defined as equivalence classes of morphisms in $\R$; it is simply a matter of checking that the equivalence relations are the same. We have that $[f] = [g]$ in $\P$ iff $F(f) = f(g)$ iff $F(f-g) = 0$. The claim is that the last is equivalent to $F(\im(f-g)) \cong 0$, which by definition means $\im(f-g) \in \ker(F)$; as observed earlier, this last condition is equivalent to the condition that $f$ and $g$ are equivalent in $\R/\ker(F)$, which would complete the proof. Thus all that remains is to verify the claim. For brevity, put $h = f-g$. Applying $F$ to the epi-mono factorization of $h$, and the fact that $F$ preserves monos and epis, leads to 
\[
    F(h) = (FA \overset{p}{\twoheadrightarrow} F(\im(h)) \overset{i}{\rightarrowtail} FB)
\]
and the hypothesis that the composite $F(h)$ is $0$ means $0 = i\circ p = i \circ 0$, whence $p = 0$ by monicity of $i$. If $p = 0 \maps FA \to F(\im(h))$ is epic, conclude that $1_{F(\im(h))} \circ p = 0 = 0_{F(\im(h))} \circ p$, whence $1_{F(\im(h))} = 0_{F(\im(h))}$, i.e., $F(\im(h)) \cong 0$, thus proving the claim.
\end{proof}

Notice that this result stands in stark contrast to the situation for ordinary (commutative) rigs: there is no first isomorphism theorem there, because the kernel of a rig map $f \maps R \to S$ generally will not suffice to describe the rig congruence on $R$ arising from the epi-mono factorization of $f$ through a quotient of $R$. This is the case even when addition in $R$ is cancellative. For example, for $R = \mathbb{N}$, the smallest rig congruence $\sim$ on $\mathbb{N}$ that identifies $2$ with $3$ induces a quotient rig map $q$ from $\mathbb{N}$ to a 3-element rig, but the kernel of that $q$ is zero. It is in recognition of this fact that we take some extra care over mundane details in the proof above, and not assume analogies based on a hasty interpretation of `categorification'. 

\section{Quotients of the free 2-rig on one generator}
\label{sec:quotients_of_the_free_2-rig}

By \cref{thm:quotient_2-rig}, if $\R$ is any semisimple 2-rig and $F \maps \R \to \Fin\Vect$ is a 2-rig map, then the (es+full, faithful) factorization of $F$ induces a 2-rig map 
\[
J \maps \R/\ker(F) \to \Fin\Vect
\]
and moreover $\R/\ker(F)$ is semisimple, while $J$ is faithful and exact. Therefore, by \cref{thm:Tannaka_2'}, $\R/\ker(F)$ is the category of finite-dimensional comodules of some commutative bialgebra. Equivalently, by \cref{lem:Comod}, it is the category $\Rep(M)$ of algebraic representations of some affine monoid $M$. In this section we explore the possibilities when $\R = \ksbar$, the free 2-rig on one generator. 

Because there is an equivalence 
\[
\TRig(\ksbar, \Fin\Vect) \simeq \Fin\Vect
\]
given by evaluating a 2-rig map $\phi \maps \ksbar \to \Fin\Vect$ at the generator $x$, such 2-rig maps are determined up to isomorphism by the isomorphism class of $V = \phi(x)$, which is in turn determined up to isomorphism by $n = \dim(V)$. Therefore 2-rig maps $\phi \maps \ksbar \to \Fin\Vect$ are classified by natural numbers $n$; we define $\phi_n \maps \ksbar \to \Fin\Vect$ to be the unique (up to isomorphism) 2-rig map that sends $x$ to $k^n$. We proceed to compute its (es+full, faithful) factorization, 
\[
\ksbar \twoheadrightarrow \ksbar/\ker(\phi_n) \rightarrowtail \Fin\Vect.
\]

Let $\Lambda^n(x) \in \ksbar$ be the $n$th exterior power of the generating object $x \in \ksbar$; this object corresponds to the sign representation of $S_n$. It generates a 2-ideal we denote as $\langle \Lambda^n \rangle$. In other words, $\langle \Lambda^n \rangle$ is defined to be the smallest 2-ideal containing $\Lambda^{n}(x)$. 

\begin{lem}
\label{lem:ker(phi_n)}
There is an equality of 2-ideals $\ker(\phi_n) = \langle \Lambda^{n+1} \rangle$. 
\end{lem}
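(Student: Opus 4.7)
The plan is to prove the two inclusions separately. For the easier direction $\langle \Lambda^{n+1} \rangle \subseteq \ker(\phi_n)$, I would observe that $\phi_n(\Lambda^{n+1}(x)) = \Lambda^{n+1}(k^n) = 0$, since the $(n+1)$-fold exterior power of an $n$-dimensional vector space vanishes. As remarked in the previous section, $\ker(\phi_n)$ is automatically a 2-ideal, and $\langle \Lambda^{n+1} \rangle$ is by definition the smallest 2-ideal containing $\Lambda^{n+1}(x)$, so this inclusion is immediate.

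For the reverse inclusion $\ker(\phi_n) \subseteq \langle \Lambda^{n+1} \rangle$, I would use the semisimple structure of $\ksbar$ established in \cite{Schur,Splitting}: every object is a finite coproduct of simple objects $S^\lambda$ indexed by Young diagrams $\lambda$, and by the last lemma of the previous section every 2-ideal is the replete finite-coproduct closure of the set of simples it contains. The map $\phi_n$ sends $S^\lambda$ to the classical Weyl module $S^\lambda(k^n)$, and it is a well-known fact (going back to Weyl) that $S^\lambda(k^n) = 0$ if and only if $\ell(\lambda) > n$. It therefore suffices to show that $S^\lambda \in \langle \Lambda^{n+1} \rangle$ for every partition $\lambda$ with $\ell(\lambda) \geq n+1$.

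The main tool for this will be the Pieri rule, which gives $\Lambda^{n+1}(x) \otimes S^\mu \cong \bigoplus_\nu S^\nu$, where $\nu$ runs over the partitions obtained from $\mu$ by adjoining a vertical strip of $n+1$ cells. Given $\lambda$ with $\ell(\lambda) \geq n+1$, it therefore suffices to exhibit a partition $\mu \subseteq \lambda$ such that $\lambda/\mu$ is a vertical strip of size $n+1$: then $S^\lambda$ is a direct summand of $\Lambda^{n+1}(x) \otimes S^\mu$, hence lies in $\langle \Lambda^{n+1} \rangle$ by the 2-ideal axioms. The combinatorial fact needed is that such a $\mu$ always exists: grouping the rows of $\lambda$ into maximal runs of equal parts, one may form a legitimate vertical strip by choosing, in each run independently, a bottom-up suffix of rows from which to remove one cell; since the total row count is $\geq n+1$, one can always distribute exactly $n+1$ removals across these runs in a compatible way. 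The main obstacle, if any, is this last combinatorial lemma; granted Pieri's rule and the semisimplicity of $\ksbar$, the remainder of the argument is bookkeeping.
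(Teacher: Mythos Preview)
Your proof is correct and takes essentially the same route as the paper's: both handle the easy inclusion identically, then for the reverse reduce to simple objects and invoke Pieri's rule to show that the simples in $\langle \Lambda^{n+1} \rangle$ are exactly those $S^\lambda$ with $\ell(\lambda) > n$. The only cosmetic difference is that the paper reaches this by tensoring $\Lambda^{n+1}$ with $x^{\otimes m}$ (iterated one-box Pieri) and supplies the explicit semistandard-tableau argument for nonvanishing of $\phi_n(\rho_\lambda)$ when $\ell(\lambda) \leq n$, whereas you cite Weyl for the latter and use the vertical-strip form of Pieri in a single step, at the cost of your small removable-strip lemma.
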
 

\begin{proof}
The inclusion $\langle \Lambda^{n+1} \rangle \subseteq \ker(\phi_n)$ follows from the fact that $\phi_n\langle \Lambda^{n+1} \rangle \cong 0$, which is clear because
\[
    \phi_n(\Lambda^{n+1}) \cong \Lambda^{n+1}(k^n) \cong 0
\]
in $\Fin\Vect$. 
    
To prove the inclusion $\ker(\phi_n) \subseteq \langle \Lambda^{n+1} \rangle$, start from the fact that a 2-ideal is uniquely determined by the simple objects it contains, indeed it is the finite coproduct closure of the class of its simple objects. The simple objects of $\langle \Lambda^{n+1} \rangle$ are those that occur as retracts of Schur objects $\Lambda^{n+1} \otimes x^{\otimes m}$. By a simple application of Pieri's rule, these simple objects correspond to partitions $\lambda_1 \geq \lambda_2 \geq \cdots \geq \lambda_{n+1} \geq \ldots$ (i.e., to Young diagrams with more than $n$ rows), so it remains to show that there is no simple object $\rho_\lambda$ in $\ker(\phi_n)$ that corresponds to a Young diagram $\lambda$ with at most $n$ rows, or to a partition $\lambda_1 \geq \cdots \geq \lambda_n$. That is to say, that $\phi_n(\rho_\lambda) \ncong 0$: this means that, letting $m$ be the number of boxes in $\lambda$, the vector space 
\[
    \phi_n(\rho_\lambda) = \rho_\lambda \otimes_{kS_m} (k^n)^{\otimes m}
\]
has positive dimension. But the dimension of this space equals the number of semistandard Young tableaux of type $\lambda$ with boxes labeled in $\{1, 2, \ldots, n\}$, and obviously there is at least one such tableau (for example, the one where all boxes in row $k$, for $1 \leq k \leq n$, are labeled by $k$). This completes the proof.
\end{proof}

\section{Proof of the main theorem}
\label{sec:main_theorem}

We have seen that there is a map of 2-rigs
\[   \phi_n \maps \ksbar \to \Fin\Vect,  \]
unique up to isomorphism, sending the generating object $x$ of the free 2-rig on one object to $k^n \in \Fin\Vect$, and that the kernel of $\phi_n$ is the 2-ideal generated by $\Lambda^{n+1}$.   It follows that the (es+full, faithful) factorization of $\phi_n$ is the evident pair
\[
\ksbar \overset{q}{\twoheadrightarrow} \ksbar/\langle \Lambda^{n+1} \rangle \overset{j}{\rightarrowtail} \Fin\Vect.
\]
We call the object $q(x)$ simply $x$, since $\ksbar/\langle \Lambda^{n+1} \rangle$ has the same objects as $\ksbar$ and $q$ is the identity on objects.

\begin{lem}
\label{lem:free_2-rig}
$\ksbar/\langle \Lambda^{n+1} \rangle$ is the free 2-rig on an object of subdimension $n$. That is to say, it is the representing object for the functor $\mathrm{Subdim}_n \maps \TRig \to \Cat$ that assigns to each 2-rig $\R$ the full subcategory of objects of subdimension $n$ in $\R$. In particular, given any 2-rig $\R$ containing an object $r$ of subdimension $n$, there is a map of 2-rigs $F \maps \ksbar/\langle \Lambda^{n+1} \rangle \to \R$ with $F(x) = r$, unique up to uniquely determined monoidal natural isomorphism.
\end{lem}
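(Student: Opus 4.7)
The plan is to combine the universal property of $\ksbar$ as the free 2-rig on one object with the universal property of the quotient construction developed in \cref{sec:quotient_2-rigs}. By \cite{Schur}, evaluation at the generator $x$ gives an equivalence of categories
\[
    \TRig(\ksbar, \R) \simeq \R
\]
natural in $\R \in \TRig$; so for each $r \in \R$ there is a 2-rig map $\widehat{r} \maps \ksbar \to \R$ with $\widehat{r}(x) = r$, unique up to unique monoidal natural isomorphism. The goal is to show that under this correspondence, the 2-rig maps out of $\ksbar$ that descend along the quotient $q \maps \ksbar \twoheadrightarrow \ksbar/\langle \Lambda^{n+1} \rangle$ are exactly those with $r$ of subdimension $n$.

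The key step is the universal property of the quotient. By the construction in \cref{defn:quotient_2-rig}, morphisms of $\ksbar/\langle \Lambda^{n+1} \rangle$ are equivalence classes of morphisms in $\ksbar$, with $f \sim g$ iff $f - g$ factors through an object of $\langle \Lambda^{n+1} \rangle$. Hence a 2-rig map $F \maps \ksbar \to \R$ factors (necessarily uniquely, since $q$ is the identity on objects) through $q$ iff $F$ kills every such morphism, which holds iff $F(I) \cong 0$ for every $I \in \langle \Lambda^{n+1} \rangle$. The full subcategory of objects sent to a zero object by a 2-rig map is itself a 2-ideal (it is replete, closed under biproducts and retracts, and absorbs tensor products); since $\langle \Lambda^{n+1} \rangle$ is by definition the smallest 2-ideal containing $\Lambda^{n+1}(x)$, this single condition $F(\Lambda^{n+1}(x)) \cong 0$ already suffices.

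Combining the two universal properties, $\widehat{r}$ factors through $q$ iff $\widehat{r}(\Lambda^{n+1}(x)) \cong 0$. Because $\widehat{r}$ is a symmetric monoidal $k$-linear functor and exterior powers in characteristic zero are defined via antisymmetrizer idempotents built from the symmetric monoidal and $k$-linear structure alone, $\widehat{r}$ preserves them, giving $\widehat{r}(\Lambda^{n+1}(x)) \cong \Lambda^{n+1}(r)$. Thus the factorization exists precisely when $\Lambda^{n+1}(r) \cong 0$, i.e.\ when $r$ has subdimension $n$. The uniqueness of the extension and the correspondence on 2-cells follow from the fact that $q$ is essentially surjective and full: any monoidal natural transformation between 2-rig maps out of $\ksbar/\langle \Lambda^{n+1} \rangle$ is determined by whiskering with $q$, which pulls the equivalence $\TRig(\ksbar/\langle \Lambda^{n+1} \rangle, \R) \simeq \mathrm{Subdim}_n(\R)$ back from the equivalence on $\ksbar$.

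The main obstacle is the verification that the extension of $\widehat{r}$ along $q$ carries not just the underlying $k$-linear functor structure but the full symmetric monoidal structure — that is, that the extended functor is genuinely a 2-rig map. This is routine given that $\ksbar/\langle \Lambda^{n+1} \rangle$ inherits its 2-rig structure objectwise from $\ksbar$ (\cref{sec:quotient_2-rigs}), so the monoidal constraints of the extension are the images under $\widehat{r}$ of those of $\ksbar$; the coherences then descend automatically since equivalence classes of morphisms respect the $k$-linear and tensor operations.
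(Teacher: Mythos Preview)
Your proposal is correct and follows essentially the same approach as the paper's proof: both combine the universal property of $\ksbar$ with the observation that a 2-rig map $G \maps \ksbar \to \R$ descends along $q$ precisely when $\langle \Lambda^{n+1} \rangle \subseteq \ker G$, which reduces to $\Lambda^{n+1}(r) \cong 0$ since $\ker G$ is a 2-ideal and 2-rig maps preserve exterior powers; the 2-cell correspondence is handled in both by the essential surjectivity and fullness of $q$. Your final paragraph on why the descended functor is a 2-rig map is actually more explicit than the paper, which simply asserts this step.
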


\begin{proof}
Evaluation at the object $x \in \ksbar$ induces a 2-rig map $\TRig(\ksbar/\langle \Lambda^{n+1}\rangle, \R) \to \R$ given by the composite 
\[\TRig(\ksbar/\langle \Lambda^{n+1}\rangle, \R) \to \TRig(\ksbar, \R) \overset{\mathrm{ev}_x}{\longrightarrow} \R.\]
All values of this functor are objects of subdimension $n$, since for any 2-rig map $A \maps \ksbar/\langle \Lambda^{n+1}\rangle \to \R$, the object $A(x)$ satisfies $\Lambda^{n+1}(A(x)) \cong A(\Lambda^{n+1}) \cong A(0) \cong 0$ because $A$ preserves exterior powers and $\Lambda^{n+1} \cong 0$ as an object in $\ksbar/\langle \Lambda^{n+1} \rangle$.  We thus have a 2-rig map
\[  \begin{array}{rcl}
\mathrm{ev} \maps \TRig(\ksbar/\langle \Lambda^{n+1}\rangle, \R) &\to& \mathrm{Subdim}_n(\R) \\
A & \mapsto & A(x).
\end{array}
\]

To prove that $\ksbar/\langle \Lambda^{n+1} \rangle$ is the representing object for $\mathrm{Subdim}_n$ we shall show that $\mathrm{ev}$ is an equivalence.  We begin by showing it is essentially surjective. If $r \in \mathrm{Subdim}_n(\R)$, there is a 2-rig map $G \maps \ksbar \to \R$ with $G(x) = r$, and from $\Lambda^{n+1}(r) \cong 0$ we deduce $G(\Lambda^{n+1}) \cong \Lambda^{n+1}(G(x)) = \Lambda^{n+1}(r) \cong 0$.  It follows that $\langle \Lambda^{n+1} \rangle \subseteq \ker G$, since $\langle \Lambda^{n+1} \rangle$ is by definition the smallest 2-ideal containing $\Lambda^{n+1}$. This implies the well-definedness of the functor $F \maps \ksbar/\langle \Lambda^{n+1}\rangle \to \R$ for which $F(\rho) = G(\rho)$ for all objects $\rho$ of $\ksbar$ and $F([f]) = G(f)$ for all morphisms $f$ of $\ksbar$. Indeed, if $[f] = 0$, then $\im(f) \in \langle \Lambda^{n+1} \rangle$ by definition of the quotient 2-rig, so $\im(f) \in \ker(G)$, i.e., $G(\im(f)) \cong 0$, whence $G(f) = 0$. The functor $F$ can be shown to be a 2-rig map using the fact that $G$ is a 2-rig map. Thus $\textrm{ev}$ is essentially surjective. 

This map $\textrm{ev}$ is also full and faithful. Indeed, the functor from which it descends, 
\[
\textrm{ev}_x \maps \TRig(\ksbar, \R) \to \R,
\]
again given by evaluation at $x$, is full and faithful. So, for each $f \maps r \to r'$ in $\mathrm{Subdim}_n(\R)$, with $r = G(x)$ and $r' = G'(x)$ for $G, G' \in \TRig(\ksbar, \R)$, there is a unique monoidal natural transformation $\phi \maps G \To G'$ such that $\phi(x) = f$. The map $\phi$ descends to a monoidal natural transformation $\psi \maps F \To F'$ in $\TRig(\ksbar/\langle \Lambda^{n+1} \rangle, \R)$, evidently unique, for which $\psi(x) = f$. This establishes full faithfulness. 

The final statement of the lemma is a simple consequence of the representability statement. If $F, G \maps \ksbar/\langle \Lambda^{n+1} \rangle \rightrightarrows \R$ are two 2-rig maps such that $F(x) = G(x) = r$, then evaluation at $x$, being full and faithful, induces an isomorphism of hom-sets 
\[
\TRig(\ksbar/\langle \Lambda^{n+1} \rangle, \R)(F, G) \overset{\textrm{ev}_x}{\longrightarrow} \mathrm{Subdim}_n(\R)(r, r),
\]
and the unique 2-cell $\alpha \maps F \to G$ that maps to $1_r$ is the asserted monoidal natural transformation. 
\end{proof}

We are now ready to prove the main theorem:

\begin{theorem1}
The 2-rig $\Rep(\M(n,k))$ is the free 2-rig on an object of subdimension $n$.  That is, given any 2-rig $\R$ containing an object $r$ of subdimension $n$, there is a map of 2-rigs $F \maps \Rep(\M(n,k)) \to \R$ with $F(k^n) = r$, unique up to a uniquely determined monoidal natural isomorphism.
\end{theorem1}

\begin{proof}
By \cref{lem:free_2-rig} we know that $\ksbar/\langle \Lambda^{n+1} \rangle$ is the free 2-rig on an object of subdimension $n$.  It thus suffices to show that
$\ksbar/\langle \Lambda^{n+1} \rangle$ is equivalent, as a 2-rig, to $\Rep(\M(n,k))$.  In fact we shall show that
\[  j \maps \ksbar/\langle \Lambda^{n+1} \rangle \to \Fin\Vect  \]
is isomorphic in $\TRig \downarrow \Fin\Vect$ to
\[   W \maps \Rep(\M(n,k)) \to \Fin\Vect \]
where $W$ is the forgetful functor.   Thanks to \cref{lem:commuting_triangle_of_2-rigs} we know that the latter is 
isomorphic to
\[ U \maps \Comod(\O(\M(n,k))) \to \Fin\Vect \]
which by \cref{ex:full_linear_monoid} is equal to
\[ U \maps \Comod(\Sym(V^\ast \otimes V)) \to \Fin\Vect \]
where $V = k^n$.   Thus, it suffices to show 
\[  j \maps \ksbar/\langle \Lambda^{n+1} \rangle \to \Fin\Vect  \]
is isomorphic to
\[  U \maps \Comod(\Sym(V^\ast \otimes V)) \to \Fin\Vect \]
in $\TRig \downarrow \Fin\Vect$.

Since $j$ is a faithful, exact 2-rig map we know by Tannaka reconstruction that it is equivalent to the forgetful functor
\[    \Comod(B) \to \Fin\Vect \]
for \emph{some} commutative bialgebra $B$.   In fact we know by \cref{thm:Tannaka_2'} that $B$ is the coend $\End(j)^{\vee}$.   Thus, to prove the main theorem, we just need to show that
\[         \End(j)^\vee \cong \Sym(V^\ast \otimes V). \]
We turn to this now.

Note that $\phi_n$ factors as 
\[
\ksbar \to \Rep(\M(n,k)) \to \Fin\Vect
\]
where $\Rep(\M(n,k)) \simeq \Comod(\Sym(V^\ast \otimes V))$. Since the second 2-rig map above is faithful, there is a further factoring as 
\[
\ksbar \to \ksbar/\langle \Lambda^{n+1} \rangle \to \Rep(\M(N,k)) \overset{i}{\to} \Fin\Vect
\]
where the latter two arrows are faithful 2-rig maps, equivalently written in the form 
\[
\ksbar/\langle \Lambda^{n+1} \rangle \xrightarrow{\alpha} \Comod(\Sym(V^\ast \otimes V)) \xrightarrow{i} \Fin\Vect.
\]
We can regard $\alpha$ as a morphism from $j$ to $i$ in $\LinCat \downarrow \Fin\Vect$.  We now show that the canonical bialgebra map $\End(j)^{\vee} \to \Sym(V^\ast \otimes V)$ adjoint to this morphism $\alpha \maps j \to i$ is an isomorphism.   This will complete the proof.

The calculation of the coend 
\[   \End(j)^{\vee} = \int^{R: \ksbar/\langle \Lambda^{n+1} \rangle}\; j(R)^\ast \otimes j(R) \]
is much simplified by the following observations: 
\begin{itemize}
\item Since the $j(R)$ are finite-dimensional, the coend may be written in the form 
\[
\int^{R: \ksbar/\langle \Lambda^{n+1} \rangle}\; \Fin\Vect(j(R), j(R))
\]
\item In this formula, $\ksbar/\langle \Lambda^{n+1} \rangle$ can be replaced by $\ksbar$, and $j$ by $\phi_n$. This is a simple consequence of essential surjectivity and fullness of the quotient map $q \maps \ksbar \to \ksbar/\langle \Lambda^{n+1} \rangle$. Indeed, very generally, any coend $\int^{c \in \C} F(c, c)$ is a coequalizer of an evident pair of maps 
\[
\sum_{c, c'} \C(c, c') \otimes F(c', c) \rightrightarrows \sum_c F(c, c).
\]
Assuming WLOG that $q \maps \D \to \C$ is full and an identity on objects, then $\int^{d \in \D} F(qd, qd) \cong \int^{c \in \C} F(c, c)$, because the coend over $d$ is the coequalizer of parallel composites 
\[
\sum_{d = c, d' = c'} \D(d, d') \otimes F(qd', qd) \overset{\pi}{\longrightarrow} \sum_{c, c'} \C(c, c') \otimes F(c', c) \rightrightarrows \sum_c F(c, c)
\]
where $\pi$ is an epimorphism induced by surjective maps $\D(d, d') \twoheadrightarrow \C(qd, qd')$ (fullness of $q$). But the coequalizer of a pair of maps $a\pi, b\pi$ is isomorphic to the coequalizer of $a, b$ on condition that $\pi$ is epic. In our situation, this gives 
\[
\begin{array}{ccl}
\int^{R: \ksbar/\langle \Lambda^{n+1} \rangle}\; \Fin\Vect(j(R), j(R)) &\cong&
\int^{R \in \ksbar} \Fin\Vect(jq(R), jq(R)) \\ \\
&=& \int^{R \in \ksbar} \Fin\Vect(\phi_n(R), \phi_n(R)).
\end{array}
\]
\item This type of coend, related to the {\em trace} of an enriched category, can be seen as a composite of $\Vect$-enriched profunctors 
\[
        \begin{tikzcd}
            k \ar[rr, "\hom_{\Fin\Vect}"] & & \Fin\Vect\op \otimes \Fin\Vect \ar[rr, "(\phi_n\op \otimes \phi_n)^\ast"] & & \ksbar\op \otimes \ksbar \ar[r, "\int^{\ksbar}"] & k.
        \end{tikzcd}
        \]
\item Since equivalences in the bicategory of profunctors are categorical Morita equivalences, it is harmless to replace this coend, as a profunctor composite, by the corresponding coend
        \[
        \int^{x^{\otimes m} \in k\mathsf{S}} \Fin\Vect(\phi_n(x^{\otimes m}), \phi_n(x^{\otimes m}))
        \]
where the coend is now over the full subcategory $k\S$ of $\ksbar$ consisting of only the representable objects $x^{\otimes n}$ of $\ksbar$, which is Morita equivalent to $\ksbar$. 
\item As $\phi_n$ preserves tensor products, and $\phi_n(x) = V = k^n$, the hom-spaces in the previous coend may be rewritten as $\Fin\Vect((k^n)^{\otimes m}, (k^n)^{\otimes m})$. 
\end{itemize}

The object $x^{\otimes m}$ appearing in the coend superscript is the regular representation of the group algebra $kS_m$. Since the only arrow of type $kS_m \to kS_n$ in $k\S$ (for $m$ different from $n$) is the zero arrow, it follows that the last coend breaks up as a coproduct 
    \[
    \sum_{m \geq 0} \int^{x^{\otimes m} \in kS_m} \Fin\Vect((k^n)^{\otimes m}, (k^n)^{\otimes m}).
    \]
This may be rewritten as 
    \[
    \sum_{m \geq 0} \int^{kS_m} ([k^n]^{\otimes m})^\ast \otimes (k^n)^{\otimes m} = \sum_{m \geq 0} ([k^n]^{\otimes m})^\ast \otimes_{kS_m} (k^n)^{\otimes m}.
    \]
Finally, this last expression is isomorphic to $\sum_{m \geq 0} \mathrm{Sym}^m ((k^n)^\ast \otimes (k^n))$, which is the symmetric algebra $\Sym(V^\ast \otimes V)$.   This completes the proof that
\[         \End(j)^\vee \cong \Sym(V^\ast \otimes V).  \qedhere \]
\end{proof}

\section{Conclusions}
\label{sec:conclusions}

The work here suggests that a number of related questions are now within reach.  One was left as a conjecture in our previous paper \cite[Conj.\ 8.8]{Splitting}:

\begin{conj}
\label{conj:free_2-rig_on_object_of_dimension_N} 
If $k$ is a field of characteristic zero, the 2-rig $\Rep(\GL(n,k))$ is the free 2-rig on an object of dimension $n$, that is, an object $x$ for which $\Lambda^n(x)$ has an inverse with respect to the tensor product.
\end{conj}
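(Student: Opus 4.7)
The plan is to use the fact that $\GL(n,k)$ is the open subscheme of $\M(n,k)$ where $\det \neq 0$, so that $\O(\GL(n,k)) = \O(\M(n,k))[\det^{-1}]$ as commutative Hopf algebras. On the 2-rig side, this should correspond to formally inverting the determinant line $\Lambda^n(x)$ in $\Rep(\M(n,k))$, paralleling the passage from affine monoids to affine group schemes.

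First I would develop a theory of 2-rig localization: for any 2-rig $\R$ and any object $L$, construct a universal 2-rig $\R[L^{-1}]$ together with a 2-rig map $\R \to \R[L^{-1}]$ under which the image of $L$ becomes $\otimes$-invertible. Concretely, one adjoins a formal object $L^{-1}$ and an isomorphism $L \otimes L^{-1} \cong 1$, then Cauchy completes. I would then run the Tannaka machinery from \cref{sec:main_theorem} on the composite $(\ksbar/\langle \Lambda^{n+1} \rangle)[\det^{-1}] \to \Fin\Vect$; the coend calculation should pick up additional sectors of negative $\det$-degree and produce $\End(j)^{\vee} \cong \O(\GL(n,k))$. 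The resulting commutative bialgebra is automatically a Hopf algebra because making $\Lambda^n(x)$ invertible endows $x$ itself with a $\otimes$-dual, and this rigidity is precisely the Tannakian signature of an affine group scheme.

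The main obstacle is showing that in any 2-rig, invertibility of $\Lambda^n(x)$ forces $\Lambda^{n+1}(x) \cong 0$. This is necessary for the universal property: any 2-rig map $\Rep(\GL(n,k)) \to \R$ must send $\Lambda^{n+1}(k^n) \cong 0$ to $\Lambda^{n+1}(r) \cong 0$, so a target $r$ with $\Lambda^n(r)$ invertible but $\Lambda^{n+1}(r) \ncong 0$ cannot support such a map. My approach would be to use invertibility of $\Lambda^n(x)$ to construct a dual $x^\ast := \Lambda^{n-1}(x) \otimes \Lambda^n(x)^{-1}$, with unit and counit built from the Pieri-type splittings $x \otimes \Lambda^{n-1}(x) \cong \Lambda^n(x) \oplus s_{(2,1^{n-2})}(x)$, verify the triangle identities (the most delicate step), and then apply a categorified Cayley--Hamilton or splitting-principle argument in the resulting rigid setting to conclude $\Lambda^{n+1}(x) \cong 0$.

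Granting this implication, the conjecture follows by combining \cref{thm:free_2-rig_on_object_of_subdimension_N} with the universal property of localization: a dimension-$n$ object $r$ in any 2-rig $\R$ is then also subdimension $n$, hence yields a unique-up-to-isomorphism 2-rig map $\Rep(\M(n,k)) \to \R$ sending $k^n$ to $r$, which, since $\Lambda^n(r)$ is already invertible, extends uniquely to a 2-rig map $\Rep(\M(n,k))[\det^{-1}] \simeq \Rep(\GL(n,k)) \to \R$.
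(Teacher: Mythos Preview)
The paper does not prove this statement: it is explicitly a \emph{conjecture}, and the authors say immediately after stating it that ``this requires further techniques beyond what we have developed here, since $\Rep(\GL(n,k))$ is not a mere quotient of $\ksbar$.'' So there is no proof in the paper to compare your proposal against.

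On the merits of your proposal itself: the localization strategy and the identification $\O(\GL(n,k)) \cong \O(\M(n,k))[\det^{-1}]$ are natural, and running Tannaka reconstruction on the localized 2-rig is a reasonable plan. But your ``main obstacle'' is not merely delicate---as stated, the implication is \emph{false}. Take $n=1$ and let $L$ be the odd line in the 2-rig of super vector spaces. Then $\Lambda^1(L)=L$ is $\otimes$-invertible, yet the braiding $\sigma_{L,L}$ is multiplication by $-1$, so the antisymmetrizer $(1-\sigma)/2$ is the identity and $\Lambda^2(L)\cong L^{\otimes 2}\ncong 0$. Your proposed construction of a dual $x^\ast=\Lambda^{n-1}(x)\otimes\Lambda^n(x)^{-1}$ does go through here (it yields $L^{-1}$, and the triangle identities hold), but the categorical trace of $1_L$ is $-1$, not $+1$, so no Cayley--Hamilton argument will force $\Lambda^2(L)\cong 0$.

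This example also shows that the conjecture, read literally, already fails at $n=1$: there is no symmetric monoidal $k$-linear functor from $\Rep(\GL(1,k))$ (i.e.\ $\Z$-graded vector spaces with trivial symmetry) to super vector spaces sending the standard character to $L$, since such a functor would have to carry the trivial self-braiding on the generator to the sign braiding on $L$. So either the notion of ``bosonic dimension $n$'' from \cite{Splitting} carries an extra condition not reproduced in the summary here (for instance, that the self-braiding on $\Lambda^n(x)$ be the identity, i.e.\ that $\Lambda^n(x)$ be an \emph{even} line), or the conjecture needs to be amended. Your argument would have a chance of going through under such a strengthened hypothesis, but you should make that hypothesis explicit and verify that it actually suffices for the Cayley--Hamilton step before proceeding.
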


This requires further techniques beyond what we have developed here, since \break $\Rep(\GL(n,k))$ is not a mere quotient of $\ksbar$, but it would be interesting to develop these techniques, which may let us characterize the representation 2-rigs of other so-called `classical' groups \cite{Weyl}.  We expect the following conjectures to hold for any field $k$ of characteristic zero:

\begin{conj}
\label{conj:Rep(SL(n,k))} 
If $k$ is a field of characteristic zero, the 2-rig $\Rep(\SL(n,k))$ is the free 2-rig on an object $x$ equipped with an isomorphism $\Lambda^n(x) \cong I$, where $I$ is the unit for the tensor product.
\end{conj}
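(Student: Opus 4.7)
The plan closely parallels the Tannaka-theoretic strategy of the main theorem. Write $\mathsf{G}_n$ for the free 2-rig on an object $x$ equipped with an isomorphism $\alpha \colon \Lambda^n(x) \xrightarrow{\cong} I$; the goal is to prove $\mathsf{G}_n \simeq \Rep(\SL(n,k))$ as 2-rigs.

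First I would construct $\mathsf{G}_n$ explicitly. Any object $x$ admitting an isomorphism $\Lambda^n(x) \cong I$ has subdimension $n$, so by \cref{lem:free_2-rig} there is a canonical 2-rig map $\Rep(\M(n,k)) \to \mathsf{G}_n$ sending $V := k^n$ to $x$. I would construct $\mathsf{G}_n$ as a localization of $\Rep(\M(n,k)) \simeq \ksbar/\langle \Lambda^{n+1} \rangle$ in which the canonical morphism $\Lambda^n(V) \to I$ is formally inverted; concretely, $\mathsf{G}_n$ has the objects of $\Rep(\M(n,k))$ but enlarged hom-spaces of the form $\colim_k \Rep(\M(n,k))(A \otimes (\Lambda^n)^{\otimes k}, B \otimes (\Lambda^n)^{\otimes k})$, after which one takes a Cauchy completion. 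This extends the theory of \cref{sec:quotient_2-rigs}, which handles 2-ideals (killing objects), to a theory of localizations (formally inverting morphisms). From either description one deduces the universal property that 2-rig maps $\mathsf{G}_n \to \R$ correspond to pairs $(r,\beta)$ with $r \in \R$ and $\beta \colon \Lambda^n(r) \cong I$ in $\R$, unique up to a uniquely determined monoidal natural isomorphism.

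Next I would verify the hypotheses for Tannaka reconstruction. The map $j \colon \mathsf{G}_n \to \Fin\Vect$ sending $x$ to $k^n$, with $\alpha$ interpreted as the canonical trivialization of $\Lambda^n(k^n)$ by the standard volume form, is a 2-rig map; one shows $\mathsf{G}_n$ is a semisimple abelian category and $j$ is faithful and exact. Then \cref{thm:Tannaka_2'} yields an equivalence $\mathsf{G}_n \simeq \Comod(B)$ in $\TRig \downarrow \Fin\Vect$, with $B = \End(j)^\vee$. Running the coend calculation of \cref{thm:free_2-rig_on_object_of_subdimension_N} but taking the identification $\Lambda^n \cong I$ into account shows that the result is the quotient of $\Sym(V^\ast \otimes V) \cong \O(\M(n,k))$ by the bialgebra ideal encoding this identification, namely the principal ideal generated by $\det - 1$, where $\det$ is the distinguished degree-$n$ element of $\Sym(V^\ast \otimes V)$ coming from $\Lambda^n(V^\ast) \otimes \Lambda^n(V)$. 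Hence $B \cong \O(\M(n,k))/(\det - 1) \cong \O(\SL(n,k))$, and \cref{lem:commuting_triangle_of_2-rigs} gives $\mathsf{G}_n \simeq \Comod(\O(\SL(n,k))) \simeq \Rep(\SL(n,k))$, completing the proof.

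The main obstacle is the construction of $\mathsf{G}_n$ and proving it remains a semisimple abelian category. Formally inverting a morphism is more subtle than quotienting by a 2-ideal as in \cref{sec:quotient_2-rigs}, since the resulting hom-spaces are not given by a simple congruence, and semisimplicity of the localization does not come for free from semisimplicity of $\Rep(\M(n,k))$. A closely related technical point is to verify that imposing the \emph{isomorphism} $\alpha \colon \Lambda^n(x) \cong I$ (rather than merely the existence of a morphism in each direction) corresponds precisely to the single relation $\det - 1 = 0$ in the bialgebra, and that the resulting quotient automatically inherits a well-defined bialgebra structure. Once these points are settled, the remainder of the proof follows the pattern of the main theorem almost verbatim.
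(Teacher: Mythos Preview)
The statement you are attempting to prove is \cref{conj:Rep(SL(n,k))}, which the paper states as a \emph{conjecture}, not a theorem. The paper does not provide a proof; indeed, immediately before this conjecture the authors write that results of this type ``require further techniques beyond what we have developed here, since $\Rep(\GL(n,k))$ is not a mere quotient of $\ksbar$.'' There is therefore no proof in the paper to compare your proposal against.

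That said, your outline is a reasonable sketch of how one might attack the conjecture, and you have correctly identified the essential difficulty: the passage from $\ksbar/\langle \Lambda^{n+1}\rangle$ to $\mathsf{G}_n$ is a localization (formally inverting a morphism, or equivalently tensoring in an inverse for $\Lambda^n$), not a quotient by a 2-ideal, so the machinery of \cref{sec:quotient_2-rigs} does not apply directly. A few specific points would need to be addressed. First, the assertion that $\Lambda^n(x)\cong I$ forces $\Lambda^{n+1}(x)\cong 0$ in an arbitrary 2-rig is not obvious and requires proof; without it you cannot invoke \cref{lem:free_2-rig} to obtain the map from $\Rep(\M(n,k))$. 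Second, semisimplicity of the localization is genuinely delicate: the colimit description of hom-spaces you give need not produce finite-dimensional homs or semisimple endomorphism algebras without further argument, and Cauchy completing afterwards does not by itself repair this. Third, verifying that the coend computation collapses exactly to $\O(\M(n,k))/(\det-1)$ requires tracking how the imposed isomorphism $\alpha$ interacts with the profunctor-composite description of $\End(j)^\vee$, which is more than a notational adjustment to the proof of the main theorem. These are precisely the ``further techniques'' the paper alludes to but does not supply.
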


\begin{conj}
\label{conj:Rep(Sp(n,k))} 
If $k$ is a field of characteristic zero, the 2-rig $\Rep(\Sp(n,k))$ is the free 2-rig on a self-dual object $x$ of dimension $n$ whose counit $\epsilon \maps x \otimes x \to I$ is antisymmetric: $\epsilon \circ S_{x,x} = -\epsilon$.
\end{conj}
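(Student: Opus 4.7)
The plan parallels the proof of Theorem \ref{thm:free_2-rig_on_object_of_subdimension_N}, but starts from a larger free 2-rig that already carries self-duality and an antisymmetric pairing, rather than from $\ksbar$. First, construct the free symmetric monoidal $k$-linear category on a self-dual object $x$ equipped with a counit $\epsilon \maps x \otimes x \to I$ and a compatible unit $\eta \maps I \to x \otimes x$ satisfying the snake equations; impose antisymmetry $\epsilon \circ S_{x,x} = -\epsilon$; and Cauchy complete. Up to equivalence the result is the additive Karoubian envelope of the Brauer category at parameter $\delta = -n$, the sign accounting for antisymmetry and the value being forced by $\epsilon \circ \eta = -n$ in any $\Sp(n,k)$-model. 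Next impose the dimension condition by quotienting by a 2-ideal containing $\Lambda^{n+1}(x)$ together with an invertibility witness for $\Lambda^n(x)$; since the symplectic form canonically trivializes the top exterior power, one can adjoin an isomorphism $\Lambda^n(x) \cong I$. Call the resulting 2-rig $\mathcal{F}_n^{\mathrm{Sp}}$; an argument just like \cref{lem:free_2-rig} shows that it has the universal property asserted for $\Rep(\Sp(n,k))$.

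Second, produce the canonical 2-rig map $j \maps \mathcal{F}_n^{\mathrm{Sp}} \to \Fin\Vect$ sending $x$ to $k^n$ with its standard symplectic form, and show that $\mathcal{F}_n^{\mathrm{Sp}}$ is a semisimple abelian category and $j$ is faithful and exact. Once this is in hand, \cref{thm:Tannaka_2'} yields $\mathcal{F}_n^{\mathrm{Sp}} \simeq \Comod(\End(j)^\vee)$ as objects of $\tRig \downarrow \Fin\Vect$. The coend $\End(j)^\vee$ is then computed along the lines of the main theorem: essential surjectivity and fullness of the quotient map reduce it to a coend over the generating (Brauer) subcategory of tensor powers, giving
\[
\End(j)^\vee \;\cong\; \bigoplus_{m \geq 0} \bigl((k^n)^{\otimes m}\bigr)^\ast \otimes_{\End_{\Sp(n,k)}((k^n)^{\otimes m})} (k^n)^{\otimes m}.
\]
One identifies this with $\O(\Sp(n,k))$ via the first and second fundamental theorems of invariant theory for the symplectic group: $\O(\Sp(n,k))$ is the quotient of $\Sym((k^n)^\ast \otimes k^n) \cong \O(\M(n,k))$ by the relations saying the universal matrix preserves the symplectic form, and these are exactly the relations imposed at each tensor power by the Brauer algebra.

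The hardest step will be establishing semisimplicity of $\mathcal{F}_n^{\mathrm{Sp}}$ and faithfulness of $j$. The Brauer category at integer parameter is notoriously non-semisimple, and the quotient by $\langle \Lambda^{n+1}(x) \rangle$ plus the volume trivialization must excise exactly the problematic simple objects (those indexed by partitions with more than $n/2$ rows) without collapsing any surviving isotypic component. This calls for an analogue, in the Brauer-with-antisymmetry setting, of the Pieri-rule argument of \cref{lem:ker(phi_n)}, combined with an extension of the 2-ideal machinery of \cref{sec:quotient_2-rigs} from $\ksbar$ to a 2-rig presented with duality generators. As a bonus, because $x$ is dualizable in $\mathcal{F}_n^{\mathrm{Sp}}$ the reconstructed bialgebra $\End(j)^\vee$ automatically carries an antipode, which accounts for recovering the group $\Sp(n,k)$ rather than merely an affine monoid.
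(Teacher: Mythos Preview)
The statement you are attempting to prove is stated in the paper as a \emph{conjecture}, not a theorem; the paper gives no proof and explicitly says that proving results of this type ``requires further techniques beyond what we have developed here, since $\Rep(\GL(n,k))$ is not a mere quotient of $\ksbar$.'' So there is no paper proof to compare against, and your proposal should be read as a research outline rather than a proof to be checked.

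As an outline it is sensible and correctly identifies the architecture one would expect: start from the Brauer category at parameter $-n$, Cauchy complete, impose the dimension condition, and then run Tannaka reconstruction together with symplectic invariant theory to identify the reconstructed bialgebra with $\O(\Sp(n,k))$. You also correctly flag the genuine obstacle: the Brauer category at integer parameter is not semisimple, and the 2-ideal machinery of \cref{sec:quotient_2-rigs} relies essentially on semisimplicity of the ambient 2-rig (\cref{thm:quotient_2-rig} assumes $\R$ semisimple). Your quotient $\mathcal{F}_n^{\mathrm{Sp}}$ is therefore not covered by the results of this paper, and you have not indicated how to show it is semisimple or even abelian, nor how to prove $j$ is faithful. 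The ``Pieri-rule analogue'' you invoke is exactly the missing content: in the Brauer setting the combinatorics of which simples die under the quotient is substantially more delicate than in $\ksbar$, and is bound up with the second fundamental theorem for $\Sp$. Until that is supplied, the semisimplicity and faithfulness claims---and hence the applicability of \cref{thm:Tannaka_2'}---remain unproven, which is precisely why the paper leaves this as a conjecture.
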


In the remaining conjectures we assume $k$ is algebraically closed, so that all nondegenerate symmetric bilinear forms on an $n$-dimensional vector space over $k$ are isomomorphic.

\begin{conj}
\label{conj:Rep(O(n,k))} 
If $k$ is an algebraically closed field of characteristic zero, the 2-rig $\Rep(\OO(n,k))$ is the free 2-rig on a
self-dual object $x$ of dimension $n$ whose counit $\epsilon \maps x \otimes x \to I$ is symmetric: $\epsilon \circ S_{x,x} = \epsilon$.
\end{conj}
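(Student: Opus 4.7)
The plan is to mirror the proof of the main theorem for $\M(n,k)$: replace the free 2-rig $\ksbar$ on one object by a free 2-rig on a self-dual object of dimension $n$ with symmetric counit, replace the coordinate bialgebra $\O(\M(n,k))$ by the coordinate Hopf algebra $\O(\OO(n,k))$, and use Tannaka reconstruction to glue the two sides together. The hypothesis that $k$ is algebraically closed is what makes the nondegenerate symmetric form on $k^n$ unique up to isomorphism, so that ``the'' orthogonal group is well defined as an affine group scheme over $k$.

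First I would construct the relevant free 2-rig explicitly. The free symmetric monoidal $k$-linear category on a self-dual object equipped with a symmetric counit is a $k$-linear Brauer category $k\mathcal{B}r_\delta$ with a formal parameter $\delta$ standing for the categorical trace $\epsilon \circ \eta$; its Cauchy completion $\overline{k\mathcal{B}r_\delta}$ plays the role of $\ksbar$ here. To enforce that $x$ has dimension $n$, I would specialize $\delta$ to $n$, quotient by the 2-ideal $\langle \Lambda^{n+1}(x)\rangle$ to obtain subdimension $n$, and finally $\otimes$-localize to invert $\Lambda^n(x)$. In parallel with \cref{lem:free_2-rig}, I would prove that the resulting 2-rig $\R$ corepresents the 2-functor sending a 2-rig to the groupoid of self-dual objects of dimension $n$ with symmetric counit.

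Second I would apply Tannaka reconstruction. The tautological representation of $\OO(n,k)$ on $k^n$ carries all the required structure, so by the universal property there is a 2-rig map $F \maps \R \to \Rep(\OO(n,k))$ with $F(x) = k^n$; composing with the forgetful functor gives $j \maps \R \to \Fin\Vect$. I would then show that $\R$ is a semisimple abelian 2-rig and that $j$ is faithful and exact, using Weyl's complete reducibility for $\OO(n,k)$ in characteristic zero together with an argument analogous to \cref{thm:quotient_2-rig} to identify $\R$ with the semisimple quotient of $\overline{k\mathcal{B}r_n}$ by negligible morphisms. Given this, \cref{thm:Tannaka_2'} yields $\R \simeq \Comod(\End(j)^\vee)$, and the remaining task is to compute the coend, which by the same Morita reduction as in the main theorem becomes
\[ \bigoplus_{m \geq 0}\; \bigl((k^n)^{\otimes m}\bigr)^\ast \otimes_{B_m(n)} (k^n)^{\otimes m}, \]
with the Brauer algebra $B_m(n)$ in place of the group algebra $kS_m$. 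Orthogonal Schur--Weyl duality identifies this sum with the subalgebra of $\O(\M(n,k))$ spanned by matrix coefficients of $\OO(n,k)$-representations, which is $\O(\OO(n,k))$; the $\otimes$-localization at $\Lambda^n$ corresponds precisely to passing from the affine monoid to the affine group.

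The main obstacle I anticipate is the $\otimes$-localization step. The machinery developed in \cref{sec:quotient_2-rigs} handles quotients by 2-ideals, but not inversions of objects, so new general-purpose tools (an analogue of Ore localization for 2-rigs, or a direct Tannakian construction of the Hopf algebra $\O(\OO(n,k))$ rather than the mere bialgebra of an affine monoid) are required to keep $\R$ abelian and $j$ exact after $\Lambda^n$ is inverted. A secondary obstacle is that $B_m(n)$ at the integer parameter $\delta = n$ fails to be semisimple for $m$ sufficiently large relative to $n$; the proof must carefully distinguish $B_m(n)$ from its semisimple quotient by the negligible-morphism ideal and show that only the latter appears in the final coend, in the spirit of the ``first isomorphism theorem'' for semisimple 2-rig maps proved in \cref{sec:quotient_2-rigs}.
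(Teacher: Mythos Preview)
The statement you are attempting to prove is listed in the paper as a \emph{conjecture}, not a theorem; the paper offers no proof. Indeed, the paper explicitly remarks (just before \cref{conj:free_2-rig_on_object_of_dimension_N}) that even the $\GL(n,k)$ case ``requires further techniques beyond what we have developed here, since $\Rep(\GL(n,k))$ is not a mere quotient of $\ksbar$,'' and the orthogonal case is placed alongside it as a further open problem. So there is nothing in the paper to compare your proposal against.

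Your outline is the natural one and identifies the right ingredients (the Brauer category in place of $k\S$, orthogonal Schur--Weyl duality, Tannaka reconstruction), and you correctly flag the two genuine obstructions that prevent the paper's machinery from applying directly: the need to $\otimes$-invert $\Lambda^n(x)$, which the quotient-2-rig formalism of \cref{sec:quotient_2-rigs} does not handle, and the non-semisimplicity of the Brauer algebras $B_m(n)$ for large $m$, which breaks the clean ``first isomorphism theorem'' argument. These are exactly the reasons the authors leave the statement as a conjecture. Your proposal is a reasonable research plan rather than a proof; to turn it into one you would need to supply the localization theory for 2-rigs (or bypass it via a direct Hopf-algebra-level Tannaka argument) and carry out the negligible-ideal analysis for the Brauer category carefully enough to show that the resulting quotient is abelian with a faithful exact fiber functor.
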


\begin{conj}
\label{conj:Rep(SO(n,k))} 
If $k$ is an algebraically closed field of characteristic zero, the 2-rig $\Rep(\SO(n,k))$ is the free 2-rig on an object $x$ that is equipped with an isomorphism $\Lambda^n(x) \cong I$ and is also self-dual with symmetric counit $\epsilon \maps x \otimes x \to I$.
\end{conj}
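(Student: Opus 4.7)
The plan is to follow the Tannakian template of \cref{thm:free_2-rig_on_object_of_subdimension_N}, but with a richer structure on the generating object. Write $\mathsf{F}$ for the candidate free 2-rig on a self-dual object $x$ of dimension $n$ with symmetric counit $\epsilon \maps x \otimes x \to I$ and a chosen isomorphism $\omega \maps \Lambda^n(x) \xrightarrow{\sim} I$. I would build $\mathsf{F}$ in three stages: first form the \emph{Brauer category} $\Br(n)$, the free $k$-linear symmetric monoidal category on a self-dual object with symmetric counit and loop value $n$; Cauchy complete it to a 2-rig $\overline{\Br(n)}$, conjectured in \cref{conj:Rep(O(n,k))} to be $\Rep(\OO(n,k))$; then adjoin the orientation $\omega$ via a 2-rig localization (in the spirit of, but dual to, the quotient constructions of \cref{sec:quotient_2-rigs}) to obtain $\mathsf{F}$.

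Next I would apply Tannaka reconstruction. The defining representation of $\SO(n,k)$ on $k^n$, with its standard symmetric form and standard orientation, is an object of the required type in $\Rep(\SO(n,k))$, so by universality there is a 2-rig map $\alpha \maps \mathsf{F} \to \Rep(\SO(n,k))$. Composing with the forgetful functor $W \maps \Rep(\SO(n,k)) \to \Fin\Vect$ yields $j = W \circ \alpha \maps \mathsf{F} \to \Fin\Vect$. Granted that $\mathsf{F}$ is abelian and $j$ is faithful and exact, \cref{thm:Tannaka_2'} gives $\mathsf{F} \simeq \Comod(B)$ with $B = \End(j)^\vee$, and it remains only to identify $B$ with $\O(\SO(n,k))$. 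The coend calculation would proceed as in the main theorem, but now the presence of $\epsilon$ and $\omega$ forces the natural map from $\Sym(V^\ast \otimes V) = \O(\M(n,k))$ to factor through the quotient by the ideal of polynomials vanishing on matrices that preserve both the form and the orientation, which is precisely $\O(\SO(n,k))$.

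The main obstacle is faithfulness of $j$, which amounts to the first and second fundamental theorems of invariant theory for $\SO(n,k)$. The first fundamental theorem says that morphisms $(k^n)^{\otimes m} \to (k^n)^{\otimes \ell}$ in $\Rep(\SO(n,k))$ are spanned by Brauer-diagram contractions together with insertions of the orientation tensor $\omega$; the second says that the only relations among these generators are the Brauer-category relations together with Pl\"ucker-type identities expressing $\omega \otimes \omega$ as an explicit polynomial in $\epsilon$. Importing these classical results of Weyl and his successors, and verifying that each relation they produce is already imposed in the presentation of $\mathsf{F}$, is the delicate step. A secondary obstacle is showing $\mathsf{F}$ is semisimple abelian, which reduces, through Schur--Weyl-type duality, to semisimplicity of the relevant Brauer algebras at parameter $\delta = n$ in characteristic zero; this is known but requires care with the exceptional values of $m$ where Brauer algebras fail to be semisimple, though these do not affect the representations appearing in $\Rep(\SO(n,k))$.
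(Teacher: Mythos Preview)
The statement you are attempting to prove is \cref{conj:Rep(SO(n,k))}, which the paper presents as an open \emph{conjecture}, not a theorem.  There is no proof in the paper to compare your proposal against; the authors explicitly place it in \cref{sec:conclusions} among problems ``requiring further techniques beyond what we have developed here.''  So the correct comparison is simply: the paper does not prove this.

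That said, your outline has several genuine gaps worth naming.  First, you invoke \cref{conj:Rep(O(n,k))} as though it were established, but it is another open conjecture in the same list; your argument for $\SO(n,k)$ is therefore conditional on an unproven statement.  Second, you identify the Cauchy completion of the Brauer category $\Br(n)$ with the free 2-rig on a self-dual object \emph{of dimension $n$}, but $\Br(n)$ only imposes that the trace (loop value) of the generator is the scalar $n$; it does not impose $\Lambda^n(x)$ invertible, which is what ``dimension $n$'' means in this paper.  These are inequivalent conditions, and passing from one to the other is nontrivial.  Third, the paper's machinery in \cref{sec:quotient_2-rigs} produces only quotient 2-rigs of semisimple 2-rigs; it does not construct localizations that \emph{adjoin} an inverse or an isomorphism such as your $\omega$, so ``in the spirit of, but dual to'' is doing a lot of unearned work.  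Fourth, the Brauer algebras at integer parameter $\delta = n$ genuinely fail to be semisimple for infinitely many degrees $m$, and your parenthetical dismissal (``these do not affect the representations appearing in $\Rep(\SO(n,k))$'') is precisely the content that would need to be proved: one must show that the relevant radical is exactly the kernel of the comparison functor, which is essentially the second fundamental theorem you already flagged as the main obstacle.
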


Going beyond the classical groups, it seems Cvitanovic \cite{Cvitanovic} has shown that the 2-rigs of representations of exceptional groups can be characterized by subtler universal properties involving cubic and quartic forms $x^{\otimes 3} \to I$ and $x^{\otimes 4} \to I$.

In another direction, it would be interesting to see how the story presented here changes over fields of nonzero characteristic.  In this paper, and the papers this one relies on \cite{Schur,Splitting}, we made heavy use of the fact that working in characteristic zero, categories of finite-dimensional representations of symmetric groups are semisimple.  This fails in nonzero characteristic.

\end{document}